\newtheorem{theorem}{Theorem}[section]
\newtheorem{lemma}{Lemma}[section]
\newtheorem{definition}{Definition}[section]
\newtheorem{remark}{Remark}[section]
\newtheorem{example}{Example}[section]
\newcommand{\beq}{\begin{equation}}
\newcommand{\eeq}{\end{equation}}
\newcommand{\beqn}{\begin{eqnarray}}
\newcommand{\eeqn}{\end{eqnarray}}
\begin{document}

\title{The existence and global exponential stability of almost periodic solutions for neutral type CNNs  on time scales\thanks{This work is supported by the National Natural Sciences Foundation of People's Republic of China under Grant 11361072.} }
\author{Bing Li$^a$, Yongkun Li$^b$\thanks{%
The corresponding author.} and Xiaofang Meng$^b$ \\
$^a$School of Mathematics and Computer Science\\
 Yunnan Minzu University\\
 Kunming, Yunnan 650500\\
 People's Republic of China\\
$^b$Department of Mathematics, Yunnan University\\
Kunming, Yunnan 650091\\
People's Republic of China}

\date{}
\maketitle \allowdisplaybreaks
\begin{abstract}
In this paper, a class of neutral type competitive neural networks with mixed time-varying delays and leakage delays on time scales is proposed. Based on the exponential dichotomy of linear dynamic equations on time scales, Banach's fixed point theorem and the theory of calculus on time scales, some sufficient conditions that are independent of the backwards graininess function  of the time scale  are obtained for the existence and  global exponential stability of almost periodic solutions for this class of neural networks.
The obtained results are completely new and indicate that
both the continuous time and the discrete time cases of the networks share the same dynamical behavior. 
Finally, an examples is given to show the effectiveness of the obtained results.
\end{abstract}

\textbf{Key words:} Competitive neural networks;  Leakage delays; Almost periodic solutions; Time scales.
\allowdisplaybreaks
\section{Introduction}

\setcounter{section}{1}
\setcounter{equation}{0}
\indent

The competitive neural networks (CNNs), which was first proposed by Cohen and Grossberg in \cite{bingf1}, model the dynamics of
cortical cognitive maps with unsupervised synaptic modifications.
In this model, there are two types of state
variables, the short-term memory variables (STM) describing
the fast neural activity and the long-term memory
(LTM) variables describing the slow unsupervised synaptic
modifications. Thus, there are two time scales in these
neural networks, one corresponds to the fast changes of
the neural network states, another corresponds to the slow
changes of the synapses by external stimuli. Since they are widely applied in the image processing, pattern recognition, signal
processing, optimization and control theory and so on \cite{bingf1,bingf2,bingf3,bingf4},
recently, many researchers paid attention to the dynamics analysis of CNNs [2-7]. For example, authors of \cite{b1,b2,b3,b4,bingda1,bingda4} studied the global stability for CNNs; authors of \cite{bingda2} investigate the  multistability  for CNNs;
authors of \cite{bingda6} investigated   multistability and multiperiodicity of  CNNs; authors of \cite{bingda3} studied the existence and global exponential stability of anti-periodic solutions for CNNs; authors of \cite{bingda5,bingda7} studied  the synchronization for CNNs  with mixed delays.
In reality, almost periodicity is much universal than periodicity. However, to the best of our knowledge, up to now, there are few papers published on the
existence of almost periodic solutions for CNNs, especially, for discrete time  CNNs.

In fact, it is natural and important that systems will contain some information about the derivative of the past state to further describe and model the dynamics for such complex neural reactions \cite{b15}, many authors investigated the dynamical behaviors of neutral type neural networks \cite{b15,bn1,bn2,bn3,bn4}. In reality, the mixed time-varying delay and leakage delay should be taken into account when modeling realistic neural networks \cite{bl1,bl2,bl3}.

In addition, it is well known that both continuous and discrete neural network systems are very important in implementation and applications. The theory of time scales, which was initialed by  Hilger \cite{d7} in his Ph.D. thesis, has recently received a lot of attention from many scholars. It not only unifies the continuous-time and discrete-time domains but also "between" them \cite{d7,bing1,bing2}. Therefore, it is necessary to study neural network systems on time scales.

On one hand, to the best of our knowledge, there is no published paper considering the global exponential stability of almost periodic solutions for CNNs with mixed time-varying delays and leakage delays on time scales. Therefore, it is a challenging and important problem in theories and applications.

On the other hand, in order to study the almost periodic dynamic equations on time scales, a concept of
almost periodic time scales was proposed in \cite{bing5}. Based on this concept, almost periodic
functions \cite{bing5}, pseudo almost periodic functions \cite{bbbli3},
almost automorphic functions \cite{bbbr1},   weighted pseudo
almost automorphic functions \cite{bbbli2},  almost periodic set-valued functions \cite{bbbset}, almost periodic functions in the sense
of Stepanov on time scales \cite{liwangpan} and so on were defined successively.  Also, some works have been done under the
concept of almost periodic time scales (see \cite{bbbr2,bbbr5,bbbr6,bbbr11,bbbr12,bbbr13,bbbr14,bbbr15}). Although  the concept of
almost periodic time scales in \cite{bing5} can unify the
continuous and discrete situations effectively, it is very restrictive because it requires the time scale with some global additivity. This
excludes many interesting time scales.
Therefore, it is a challenging and important problem in
theories and applications to study almost periodic problems on the time scale that  does not require such global additivity.

Motivated by above, in this paper, we propose the following competitive neural networks with mixed time-varying delays and leakage
delays on time scales:
\begin{eqnarray*}\label{aa1}
\left\{\begin{array}{lll}
STM: x_i^\nabla(t)&=&-\alpha_i(t)x_i(t-\eta_i(t))+\sum\limits_{j=1}^{n}D_{ij}(t)f_j(x_j(t))
+\sum\limits_{j=1}^{n}D_{ij}^\tau(t)f_j(x_j(t-\tau_{ij}(t)))\\
&&+\sum\limits_{j=1}^n\bar{D}_{ij}(t)\int_{t-\sigma_{ij}(t)}^{t}f_j(x_j(s))\nabla s+\sum\limits_{j=1}^n\tilde{D}_{ij}(t)\int_{t-\zeta_{ij}(t)}^{t}f_j(x_j^\nabla(s))\nabla s\\
&&+B_i(t)S_i(t)y_j+I_i(t),\\
LTM: m_{ij}^\nabla(t)&=&-c_i(t)m_{ij}(t-\varsigma_i(t))+y_jE_i(t)f_i(x_i(t))+J_i(t),
\end{array}\right.
\end{eqnarray*}
where $i=1,2,\ldots,n$, $\mathbb{T}$ is an almost periodic time scale, $x_{i}(t)$ is the neuron current activity level, $\alpha_{i}(t), c_{i}(t)$ are the time variable
of the neuron, $f_j(x_j(t))$ is the output of neurons, $m_{ij}(t)$
is the synaptic efficiency, $y_i$ is the constant external stimulus, $D_{ij}(t)$ and $D_{ij}^\tau(t)$, $\bar{D}_{ij}(t)$, $\tilde{D}_{ij}(t)$ represent the connection weight and the synaptic weight of delayed feedback between the $i$th neuron and the $j$th neuron respectively, $B_{i}(t)$ is the strength of the external stimulus, $E_{i}(t)$ denotes disposable scale, $I_{i}(t)$, $J_{i}(t)$ denote the external inputs on the $i$th neuron at time $t$, $\eta_{i}(t)$ and $\varsigma_{i}(t)$ are leakage delays and satisfy $t-\eta_{i}(t)\in \mathbb{T}$, $t-\varsigma_{i}(t)\in \mathbb{T}$ for $t\in \mathbb{T}$, $\tau_{ij}(t)$, $\sigma_{ij}(t)$ and $\zeta_{ij}(t)$ are transmission delays and satisfy $t-\tau_{ij}(t)\in \mathbb{T}$ , $t-\sigma_{ij}(t)\in \mathbb{T}$, $t-\zeta_{ij}(t)\in \mathbb{T}$ for $t\in \mathbb{T}$.

Setting $S_i=\sum\limits_{j=1}^{n}m_{ij}(t)y_i=\mathbf{y}^T \mathbf{m}_i(t)$, where  $\mathbf{y}=(y_1,y_2,\ldots,y_n)^T,\mathbf{ m}_i=(m_{i1},m_{i2},\ldots,m_{in})^T$
  and, without loss of generality,
the input stimulus $\mathbf{y}$ is assumed to be normalized with unit magnitude
 $|\mathbf{y}|^2=1$, summing up the LTM over $j$, then the above networks are simplified as the
networks:
\begin{eqnarray}\label{a1}
\left\{\begin{array}{lll}
STM: x_i^\nabla(t)&=&-\alpha_i(t)x_i(t-\eta_i(t))+\sum\limits_{j=1}^{n}D_{ij}(t)f_j(x_j(t))
+\sum\limits_{j=1}^{n}D_{ij}^\tau(t)f_j(x_j(t-\tau_{ij}(t)))\\
&&+\sum\limits_{j=1}^n\bar{D}_{ij}(t)\int_{t-\sigma_{ij}(t)}^{t}f_j(x_j(s))\nabla s+\sum\limits_{j=1}^n\tilde{D}_{ij}(t)\int_{t-\zeta_{ij}(t)}^{t}f_j(x_j^\nabla(s))\nabla s\\
&&+B_i(t)S_i(t)+I_i(t),\\
LTM: S_i^\nabla(t)&=&-c_i(t)S_i(t-\varsigma_i(t))+E_i(t)f_i(x_i(t))+J_i(t).
\end{array}\right.
\end{eqnarray}

For convenience,  for almost periodic functions on time scales, we introduce the following notations:
\[\alpha_{i}^{+}=\sup\limits_{t\in\mathbb{T}}|\alpha_{i}(t)|,\,\,\,\alpha_{i}^{-}=\inf\limits_{t\in\mathbb{T}}|\alpha_{i}(t)|
,\,\,\,c_{i}^{+}=\sup\limits_{t\in\mathbb{T}}|c_{i}(t)|,\,\,\,c_{i}^{-}=\inf\limits_{t\in\mathbb{T}}|c_{i}(t)|,\]
\[\eta_{i}^{+}=\sup\limits_{t\in\mathbb{T}}|\eta_{i}(t)|,\,\,\,\varsigma_{i}^{+}=\sup\limits_{t\in\mathbb{T}}|\varsigma_{i}(t)|,
\,\,\,D_{ij}^{+}=\sup\limits_{t\in\mathbb{T}}|D_{ij}(t)|,\,\,\,(D_{ij}^\tau)^{+}=\sup\limits_{t\in\mathbb{T}}|D_{ij}^\tau(t)|,\]
\[\bar{D}_{ij}^{+}=\sup\limits_{t\in\mathbb{T}}|\bar{D}_{ij}(t)|
,\,\,\,\tilde{D}_{ij}^{+}=\sup\limits_{t\in\mathbb{T}}|\tilde{D}_{ij}(t)|,\,\,\,B_{i}^{+}=\sup\limits_{t\in\mathbb{T}}|B_{i}(t)|,\,\,\,
E_{i}^{+}=\sup\limits_{t\in\mathbb{T}}|E_{i}(t)|,\]
\[\tau_{ij}^{+}=\sup\limits_{t\in\mathbb{T}}|\tau_{ij}(t)|,\,\,\,\sigma_{ij}^{+}=\sup\limits_{t\in\mathbb{T}}|\sigma_{ij}(t)|,\,\,\,
\zeta_{ij}^{+}=\sup\limits_{t\in\mathbb{T}}|\zeta_{ij}(t)|,\,\,\,i,j=1,2,\dots,n.\]

We denote $[a,b]_{\mathbb{T}}=\{t|t\in[a,b]\cap\mathbb{T}\}$. The  initial condition associated with system \eqref{a1} is of the form
\begin{eqnarray}\label{a2}
&&x_{i}(s)=\varphi_{i}(s),\,\,\,s\in[-\theta,0]_{\mathbb{T}},\\
&&S_{i}(s)=\phi_{i}(s),\,\,\,s\in[-\theta,0]_{\mathbb{T}},
\end{eqnarray}
where $i=1,2,\dots,n$, $\theta=\max\{\delta,\tau,\sigma,\zeta,\varsigma\}$, $\delta=\max\limits_{1\leq i\leq n}\{\delta_{i}^{+}\}$, $\tau=\max\limits_{1\leq i,j\leq n}\{\tau_{ij}^{+}\}$, $\sigma=\max\limits_{1\leq i,j\leq n}\{\sigma_{ij}^{+}\}$,
$\zeta=\max\limits_{1\leq i,j,\leq n}\{\zeta_{ij}^{+}\}$, $\varsigma=\max\limits_{1\leq i\leq n}\{\varsigma_{i}^{+}\}$, $i,j=1,2,\ldots,n$, $\varphi_{i}(\cdot)$, $\phi_{i}(\cdot)$ are real-valued bounded $\nabla$-differentiable
functions defined on $[-\theta,0]_{\mathbb{T}}$.

Our main purpose of this paper is to study the the existence and stability of almost periodic solutions of $\eqref{a1}$ on a new almost periodic time scale $\mathbb{T}$ which will be defined in the next section.

This paper is organized as follows. In Section 2, we introduce some definitions
and make some preparations for later sections
and we extend  the almost-periodic theory on time scales with the delta derivative in \cite{bing3} to that with the nabla derivative. In Section 3, by utilizing
Banach's fixed point theorem and the theory of calculus on time scales, we present
some sufficient conditions for the existence of almost periodic solutions of $\eqref{a1}$.
In Section 4, we prove that the almost periodic solution obtained in Section 3 is globally
exponentially stable. In Section 5, an examples are given to illustrate the effectiveness of the
theoretical results. Finally, we draw a conclusion in Section 6.

\section{Preliminaries}

\setcounter{section}{2}
\setcounter{equation}{0}
\indent

In this section, we shall first recall some fundamental definitions and lemmas in \cite{bing1,bing2}. Also, we extend the almost periodic theory on time scales with the delta derivative to that with the nabla derivative.

A time scale $\mathbb{T}$
is an arbitrary nonempty closed subset of the real set $\mathbb{R}$ with the topology and ordering inherited from $\mathbb{R}$.
The forward jump operator
$\sigma:\mathbb{T}\rightarrow\mathbb{T}$ is defined by $\sigma(t)=\inf\big\{s\in \mathbb{T},s>t\big\}$ for all $t\in \mathbb{T}$,
while the backward jump operator $\rho:\mathbb{T}\rightarrow\mathbb{T}$ is defined by
$\rho(t)=\sup\big\{s\in \mathbb{T},s<t\big\}$ for all $t\in\mathbb{T}$.

A point $t\in\mathbb{T}$ is called left-dense if $t>\inf\mathbb{T}$
and $\rho(t)=t$, left-scattered if $\rho(t)<t$, right-dense if
$t<\sup\mathbb{T}$ and $\sigma(t)=t$, and right-scattered if
$\sigma(t)>t$. If $\mathbb{T}$ has a left-scattered maximum $m$,
then $\mathbb{T}^k=\mathbb{T}\setminus\{m\}$; otherwise
$\mathbb{T}^k=\mathbb{T}$. If $\mathbb{T}$ has a right-scattered
minimum $m$, then $\mathbb{T}_k=\mathbb{T}\setminus\{m\}$; otherwise
$\mathbb{T}_k=\mathbb{T}$. Finally, the backwards graininess function
$\nu: \mathbb{T}_{k}\rightarrow [0,\infty)$ is defined by $\nu(t)=t-\rho(t)$.

A function $f:\mathbb{T}\rightarrow\mathbb{R}$ is ld-continuous provided
it is continuous at left-dense point in $\mathbb{T}$ and its right-side
limits exist at right-dense points in $\mathbb{T}$.

\begin{definition}$\cite{bing2}$
Let $f:\mathbb{T} \rightarrow \mathbb{R}$ be a function and
$t\in \mathbb{T}_{k}$. Then we define $f^{\nabla}(t)$ to be the
number (provided its exists) with the property that given any
$\varepsilon>0$, there is a neighborhood $U$ of
$t$ (i.e, $U=(t-\delta,t+\delta)\cap \mathbb{T}$ for some
$\delta>0$) such that
\[
|f(\rho(t))-f(s)-f^{\nabla}(t)(\rho(t)-s)|\leq\varepsilon|\rho(t)-s|
\]
for all $s\in U$, we call $f^{\nabla}(t)$ the nabla derivative of $f$ at $t$.
\end{definition}

Let $f:\mathbb{T} \rightarrow \mathbb{R}$ be ld-continuous. If $F^{\nabla}(t)=f(t)$, then we
define the nabla integral by $\int_a^{b}f(t)\nabla t=F(b)-F(a)$.

A function $p:\mathbb{T}\rightarrow\mathbb{R}$ is called $\nu$-regressive
if $1-\nu(t)p(t)\neq 0$ for all $t\in \mathbb{T}_k$. The set of all $\nu$-regressive and
left-dense continuous functions $p:\mathbb{T}\rightarrow\mathbb{R}$ will
be denoted by $\mathcal{R}_{\nu}=\mathcal{R}_{\nu}(\mathbb{T})=\mathcal{R}_{\nu}(\mathbb{T},\mathbb{R})$. We
define the set $\mathcal{R}_{\nu}^{+}=\mathcal{R}_{\nu}^{+}(\mathbb{T},\mathbb{R})=\{p\in \mathcal{R}_{\nu}:1-\nu(t)p(t)>0,\,\,\forall
t\in\mathbb{T}\}$.

If $p\in \mathcal {R}_{\nu}$, then we define the nabla exponential function by
\begin{equation*}
    \hat{e}_{p}(t,s)=\exp\bigg(\int_{s}^{t}\hat{\xi}_{\nu(\tau)}\big(p(\tau)\big)\nabla\tau\bigg),\,\,\mathrm{for}\,\,t,s\in\mathbb{T}
\end{equation*}
with the $\nu$-cylinder transformation
\[
\hat{\xi}_h(z)=\bigg\{\begin{array}{ll} -\frac{\mathrm{log}(1-hz)}{h} &{\rm
if}\,h\neq
0,\\
z &{\rm if}\,h=0.\\
\end{array}
\]

\begin{definition}$(\cite{bing2})$
Let $p,q\in\mathcal{R}_{\nu}$, then we define a circle plus addition by
$(p \oplus_{\nu} q)(t)= p(t)+q(t)-\nu(t) p(t)q(t)$, for all $t\in\mathbb{T}_{k}$. For $p\in\mathcal{R}_{\nu}$, define a circle minus
$p$ by $\ominus_{\nu} p=-\frac{p}{1-\nu p}$.
\end{definition}

\begin{lemma}$(\cite{bing2})$
Let $p,q\in\mathcal{R}_{\nu}$, and $s,t,r\in\mathbb{T}$. Then
\begin{itemize}
\item[$(i)$] $\hat{e}_{0}(t,s)\equiv 1$ and $\hat{e}_{p}(t,t)\equiv 1$;
\item[$(ii)$] $\hat{e}_{p}(\rho(t),s)=(1-\nu(t)p(t))\hat{e}_{p}(t,s)$;
\item[$(iii)$] $\hat{e}_{p}(t,s)=\frac{1}{\hat{e}_{p}(s,t)}=\hat{e}_{\ominus_{\nu} p}(s,t)$;
\item[$(iv)$] $\hat{e}_{p}(t,s)\hat{e}_{p}(s,r)=\hat{e}_{p}(t,r)$;
\item[$(v)$] $(\hat{e}_{p}(t,s))^{\nabla}=p(t)\hat{e}_{p}(t,s)$.
\end{itemize}
\end{lemma}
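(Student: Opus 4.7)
The plan is to verify each of the five properties directly from the definition
\[
\hat{e}_p(t,s) = \exp\!\Big(\int_s^t \hat{\xi}_{\nu(\tau)}(p(\tau))\,\nabla\tau\Big),
\]
using basic properties of the nabla integral together with the algebraic identity $\hat{\xi}_h(z) = -\log(1-hz)/h$ for $h \neq 0$. Parts (i) and (iv) are immediate: for (i) the integrand vanishes when $p \equiv 0$ and $\int_t^t \cdot\,\nabla\tau = 0$, while for (iv) additivity of the nabla integral plus multiplicativity of the exponential finishes the argument.

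For (ii), I would split according to whether $t$ is left-dense or left-scattered. In the left-dense case $\rho(t) = t$ and $\nu(t) = 0$, so both sides collapse to $\hat{e}_p(t,s)$. In the left-scattered case, the one-point nabla integral gives
\[
\int_{\rho(t)}^{t} \hat{\xi}_{\nu(\tau)}(p(\tau))\,\nabla\tau \;=\; \nu(t)\,\hat{\xi}_{\nu(t)}(p(t)) \;=\; -\log(1-\nu(t)p(t)),
\]
and splitting the range of integration in the definition of $\hat{e}_p(t,s)$ yields (ii) after exponentiating. Property (iii) then follows by two algebraic observations: orienting the nabla integral from $t$ to $s$ reverses its sign, which yields $\hat{e}_p(t,s)\hat{e}_p(s,t) = 1$; and a short computation shows $\hat{\xi}_h(\ominus_\nu p) = -\hat{\xi}_h(p)$, because $1 - h\bigl(-p/(1-hp)\bigr) = 1/(1-hp)$, so exponentiating produces $\hat{e}_{\ominus_\nu p}(s,t) = \hat{e}_p(t,s)$.

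Property (v) is the substantive one and I would argue it separately at left-scattered and left-dense points. At a left-scattered point $t$, the nabla derivative reduces to $\bigl(\hat{e}_p(t,s) - \hat{e}_p(\rho(t),s)\bigr)/\nu(t)$, and inserting (ii) gives exactly $p(t)\hat{e}_p(t,s)$. At a left-dense point one has $\nu(t) = 0$, hence $\hat{\xi}_{\nu(t)}(p(t)) = p(t)$; the fundamental theorem of calculus for the nabla integral, applied to the ld-continuous integrand $\hat{\xi}_{\nu(\cdot)}(p(\cdot))$, combined with the chain rule for $\exp$, again delivers the identity. The main obstacle I foresee is justifying the left-dense branch of (v): one must confirm that $\tau \mapsto \hat{\xi}_{\nu(\tau)}(p(\tau))$ is ld-continuous at $t$ so that the nabla fundamental theorem applies. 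This uses $\nu(t)=0$ together with the ld-continuity of $p$, and careful handling of this mixed-continuity point at the interface between the two branches of $\hat{\xi}_h$ is the step that requires the most attention.
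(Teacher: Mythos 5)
The paper does not prove this lemma at all: it is quoted verbatim from Bohner--Peterson (reference \cite{bing2}) as a known property of the nabla exponential function, so there is no in-paper argument to compare against. Your reconstruction is correct and is essentially the standard proof from that reference --- the case split at left-dense versus left-scattered points for (ii) and (v), the one-point nabla integral giving $\nu(t)\hat{\xi}_{\nu(t)}(p(t))=-\log(1-\nu(t)p(t))$, and the identity $\hat{\xi}_h(\ominus_\nu p)=-\hat{\xi}_h(p)$ for (iii) are exactly the steps used there, and your flagged concern about ld-continuity of $\tau\mapsto\hat{\xi}_{\nu(\tau)}(p(\tau))$ is the genuine technical point that the textbook treatment also has to address.
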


\begin{lemma}$(\cite{bing2})$ Let $f,g$ be nabla differentiable functions on $\mathbb{T}$, then
\begin{itemize}
\item[$(i)$] $(v_{1}f+v_{2}g)^{\nabla}=v_{1}f^{\nabla}+v_{2}g^{\nabla}$, for any constants $v_{1},v_{2}$;
\item[$(ii)$] $(fg)^{\nabla}(t)=f^{\nabla}(t)g(t)+f(\rho(t))g^{\nabla}(t)=f(t)g^{\nabla}(t)+f^{\nabla}(t)g(\rho(t))$;
\item[$(iii)$] If $f$ and $f^{\nabla}$ are continuous, then $(\int_{a}^{t}f(t,s)\nabla s)^{\nabla}=f(\rho(t),t)+\int_{a}^{t}f(t,s)\nabla s.$
\end{itemize}
\end{lemma}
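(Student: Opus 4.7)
The plan is to derive all three properties directly from Definition~2.1, mirroring the standard scheme for the delta calculus with $\rho$ in place of $\sigma$ throughout. The recurring tool is that nabla differentiability of $f$ at $t\in\mathbb{T}_{k}$ means that for every $\varepsilon>0$ there is a neighborhood $U$ of $t$ on which
\[
f(\rho(t))-f(s)=f^{\nabla}(t)\bigl(\rho(t)-s\bigr)+r_{f}(s),\qquad |r_{f}(s)|\le\varepsilon|\rho(t)-s|,
\]
and analogously for $g$. All three parts reduce to bookkeeping with these residuals.

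For~(i), I would fix $t$ and $\varepsilon>0$, choose a single neighborhood on which the above approximation holds simultaneously for $f$ and $g$ with tolerance $\varepsilon/(|v_{1}|+|v_{2}|+1)$, and then apply the triangle inequality to $v_{1}r_{f}(s)+v_{2}r_{g}(s)$. This part is entirely routine.

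For~(ii), the key algebraic identity is the splitting
\[
f(\rho(t))g(\rho(t))-f(s)g(s)=f(\rho(t))\bigl[g(\rho(t))-g(s)\bigr]+\bigl[f(\rho(t))-f(s)\bigr]g(s).
\]
Substituting the residual representation for each bracket and using the left-continuity of $g$ at $t$ (which follows from nabla differentiability) to replace $g(s)$ by $g(\rho(t))$ up to a vanishing error produces the first formula $f^{\nabla}(t)g(t)+f(\rho(t))g^{\nabla}(t)$; splitting the same difference in the opposite order yields the symmetric formula. The only verification needed is that the cross terms are $o(|\rho(t)-s|)$ on a small enough neighborhood, and this is a direct consequence of bounding each residual separately.

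For~(iii), I believe the statement contains a typographical slip and the intended formula is
\[
\Big(\int_{a}^{t}f(t,s)\,\nabla s\Big)^{\nabla}=f(\rho(t),t)+\int_{a}^{t}f^{\nabla}(t,s)\,\nabla s,
\]
where the inner nabla derivative is with respect to the first argument. My strategy is to decompose
\[
\int_{a}^{\rho(t)}f(\rho(t),r)\,\nabla r-\int_{a}^{s}f(s,r)\,\nabla r=\int_{s}^{\rho(t)}f(\rho(t),r)\,\nabla r+\int_{a}^{s}\bigl[f(\rho(t),r)-f(s,r)\bigr]\,\nabla r,
\]
estimate the first piece by continuity of $f$ to get $f(\rho(t),t)(\rho(t)-s)+o(|\rho(t)-s|)$, and expand the integrand of the second piece via nabla differentiability in the first variable to get $(\rho(t)-s)\int_{a}^{s}f^{\nabla}(t,r)\,\nabla r+o(|\rho(t)-s|)$. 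Letting $s\to t$ and using the continuity of $f^{\nabla}$ to pass the limit under the integral completes the proof. The main obstacle, and the one nontrivial analytic step, is controlling the residual in the parameter integral \emph{uniformly} in $r\in[a,s]_{\mathbb{T}}$; this is handled by compactness of $[a,t]_{\mathbb{T}}$ together with the assumed continuity of $f^{\nabla}$. Apart from this uniformity, the whole lemma is a mechanical translation of the corresponding delta-calculus identities.
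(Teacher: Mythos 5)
The paper offers no proof of this lemma: it is quoted verbatim (as Lemma 2.2) from the Bohner--Peterson monograph \cite{bing2}, so there is no internal argument to compare yours against. Judged on its own, your outline is the standard first-principles proof and is essentially sound. You are right that part $(iii)$ as printed is a typo and that the intended identity is $\big(\int_a^t f(t,s)\nabla s\big)^{\nabla}=f(\rho(t),t)+\int_a^t f^{\nabla}(t,s)\nabla s$ with the nabla derivative taken in the first argument; catching that is the most useful observation in your write-up. Two small points deserve tightening. In $(ii)$, your parenthetical says you replace $g(s)$ by $g(\rho(t))$; for left-scattered $t$ continuity gives $g(s)\to g(t)\neq g(\rho(t))$, and it is the replacement $g(s)\rightsquigarrow g(t)$ that produces the stated term $f^{\nabla}(t)g(t)$ — your final formula is right, but the justification as written points at the wrong limit (alternatively, the left-scattered case is an exact two-line algebraic identity and needs no limiting argument at all). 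In $(iii)$, the uniformity of the residual in $r$ is genuinely the whole content of the proof: passing from pointwise nabla differentiability plus continuity of $f^{\nabla}$ to the uniform estimate $|f(\rho(t),r)-f(s,r)-f^{\nabla}(t,r)(\rho(t)-s)|\le\varepsilon|\rho(t)-s|$ for all $r\in[a,t]_{\mathbb{T}}$ requires its own lemma (a mean-value inequality or covering argument on the compact set), and invoking ``compactness plus continuity'' names the right ingredients without supplying the argument. Since the authors simply cite the textbook, this level of sketch is acceptable, but that step is the one place where the lemma is not a mechanical translation of the delta-calculus case.
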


\begin{lemma}$(\cite{bing2})$ Assume $p\in\mathcal{R}_{\nu}$ and $t_{0}\in\mathbb{T}$. If $1-\nu(t)p(t)>0$ for $t\in\mathbb{T}$, then $\hat{e}_{p}(t,t_{0})>0$ for all $t\in\mathbb{T}$.
\end{lemma}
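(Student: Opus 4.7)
The plan is to unwind the defining formula
\[
\hat{e}_p(t, t_0) = \exp\Bigl(\int_{t_0}^{t} \hat{\xi}_{\nu(\tau)}\bigl(p(\tau)\bigr)\, \nabla \tau\Bigr)
\]
and verify that the exponent is a \emph{real} number; positivity then follows immediately because $\exp$ maps $\mathbb{R}$ into $(0,\infty)$. So everything reduces to checking that the integrand $\hat{\xi}_{\nu(\tau)}(p(\tau))$ is real-valued and that the nabla integral over $[t_0,t]_{\mathbb{T}}$ exists.

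First I would split on the type of the point $\tau \in \mathbb{T}$. If $\tau$ is left-dense, then $\nu(\tau) = 0$ and the cylinder transformation reduces to $\hat{\xi}_{\nu(\tau)}(p(\tau)) = p(\tau) \in \mathbb{R}$. If $\tau$ is left-scattered, then $\nu(\tau) > 0$ and
\[
\hat{\xi}_{\nu(\tau)}\bigl(p(\tau)\bigr) = -\frac{\log\bigl(1 - \nu(\tau)\,p(\tau)\bigr)}{\nu(\tau)};
\]
the standing hypothesis $1 - \nu(\tau)p(\tau) > 0$ is precisely what is required to make the real logarithm well-defined, so this expression is again a real number. Since $p \in \mathcal{R}_{\nu}$ is by definition ld-continuous, the composite $\tau \mapsto \hat{\xi}_{\nu(\tau)}(p(\tau))$ is an ld-continuous real-valued function on $\mathbb{T}$, so its nabla integral over $[t_0,t]_{\mathbb{T}}$ exists and is a real number. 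Applying $\exp$ to a real number yields a positive value, so $\hat{e}_{p}(t, t_0) > 0$ for every $t \in \mathbb{T}$.

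The argument is essentially a direct verification: the hypothesis $1 - \nu(t)p(t) > 0$ was designed precisely to exclude the one configuration (non-positive argument of the logarithm at a left-scattered point) that could produce a non-real or undefined exponent. There is no serious obstacle; the only mildly delicate ingredient is knowing that the $\nu$-cylinder transformation $\hat{\xi}_{h}(z)$ is jointly continuous on the slab $\{(h,z) : 1 - hz > 0\}$ with the convention $\hat{\xi}_{0}(z) = z$ recovered as a removable singularity, so that the case split between left-dense and left-scattered $\tau$ does not obstruct the required ld-continuity.
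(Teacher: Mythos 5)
Your proof is correct: the hypothesis $1-\nu(t)p(t)>0$ makes the argument of the logarithm in the $\nu$-cylinder transformation positive at every left-scattered point, so the integrand is real-valued and ld-continuous, the nabla integral is a real number, and $\exp$ of a real number is positive. The paper itself gives no proof (the lemma is quoted from Bohner--Peterson), and your argument is exactly the standard one found there, including the correct identification of the only delicate point, namely the removable singularity of $\hat{\xi}_h(z)$ at $h=0$.
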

\begin{definition}\cite{bing5} \label{def21} A time scale $\mathbb{T}$ is called an almost periodic time scale if
\begin{eqnarray*}
\Pi=\big\{\tau\in\mathbb{R}: t\pm\tau\in\mathbb{T}, \forall t\in{\mathbb{T}}\big\}\neq \{0\}.
\end{eqnarray*}
\end{definition}
\begin{definition} $(\cite{bing3,liwang1})$\label{def31}
A time scale $\mathbb{T}$ is called an almost periodic time scale if the set
$$\Lambda_0:=\big\{\tau\in \mathbb{R}:\mathbb{T}_{\pm \tau}\neq \emptyset\big\}\neq\{0\},$$
where $\mathbb{T}_{\pm\tau}=\mathbb{T}\cap\{\mathbb{T}\mp\tau\}=\mathbb{T}\cap \{t\mp\tau: t\in \mathbb{T}\}$, and there exists a set $\Lambda$ satisfying
\begin{itemize}
  \item [$(a)$]   $0\in\Lambda\subseteq \Lambda_0$,
  \item [$(b)$] $\Pi(\Lambda)\setminus\{0\}\neq \emptyset$,
  \item [$(c)$] $\widetilde{\mathbb{T}}:=\mathbb{T}(\Pi)=\bigcap\limits_{\tau\in\Pi}\mathbb{T}_\tau\neq \emptyset$,
\end{itemize}where $\Pi:=\Pi(\Lambda)=\{\tau\in \Lambda\subseteq \Lambda_0: \sigma\pm \tau\in \Lambda, \forall \sigma\in \Lambda\}$.
\end{definition}
Obviously, if $t\in \mathbb{T}_\tau$, then $t+\tau\in \mathbb{T}.$ If $t\in \widetilde{\mathbb{T}}$, then $t\pm\tau\in \mathbb{T}$ for $\tau\in \Pi$.
\begin{lemma}$(\cite{bing3})$\label{lem31}
If $\mathbb{T}$ is an almost periodic time scales under Definition \ref{def31}, then  $\widetilde{\mathbb{T}}$ is an almost periodic time scale under Definition \ref{def21}.
\end{lemma}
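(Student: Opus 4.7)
The plan is to show that every $\tau\in\Pi$ also belongs to the translation set required by Definition \ref{def21} for $\widetilde{\mathbb{T}}$, which forces $\{\tau\in\mathbb{R}:t\pm\tau\in\widetilde{\mathbb{T}},\,\forall t\in\widetilde{\mathbb{T}}\}\supseteq\Pi$, and hence contains a nonzero element by condition $(b)$ of Definition \ref{def31}. First, a quick bookkeeping step: $\widetilde{\mathbb{T}}$ is nonempty by $(c)$, and it is closed because each $\mathbb{T}_\sigma=\mathbb{T}\cap(\mathbb{T}-\sigma)$ is an intersection of closed subsets of $\mathbb{R}$, so $\widetilde{\mathbb{T}}=\bigcap_{\sigma\in\Pi}\mathbb{T}_\sigma$ is closed; therefore $\widetilde{\mathbb{T}}$ is a bona fide time scale and it only remains to verify the translation property.

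The heart of the argument is to establish that $\Pi$ is an additive subgroup of $\mathbb{R}$. Using $0\in\Lambda$ from $(a)$, taking $\sigma=0$ in the defining condition of $\Pi$ immediately yields $-\tau\in\Lambda$ whenever $\tau\in\Pi$, and then the relation $\sigma\pm(-\tau)=\sigma\mp\tau\in\Lambda$ shows $-\tau\in\Pi$; closure under addition follows by applying the defining condition twice, namely if $\tau_1,\tau_2\in\Pi$ and $\sigma\in\Lambda$, then $\sigma+\tau_1\in\Lambda$ and hence $(\sigma+\tau_1)+\tau_2\in\Lambda$, giving $\sigma+(\tau_1+\tau_2)\in\Lambda$ (and analogously for the minus sign), while setting $\sigma=\tau_1$ yields $\tau_1+\tau_2\in\Lambda$. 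Hence $\tau_1\pm\tau_2\in\Pi$.

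With this group property in hand, fix $\tau\in\Pi$ and $t\in\widetilde{\mathbb{T}}$, and for each $\sigma\in\Pi$ check that $t\pm\tau\in\mathbb{T}_\sigma$, i.e.\ that $t\pm\tau\in\mathbb{T}$ and $(t\pm\tau)+\sigma\in\mathbb{T}$. Since $\pm\tau\in\Pi$, membership of $t$ in $\mathbb{T}_{\pm\tau}$ delivers $t\pm\tau\in\mathbb{T}$; since $\sigma\pm\tau\in\Pi$ by the subgroup property, membership of $t$ in $\mathbb{T}_{\sigma\pm\tau}$ delivers $t+(\sigma\pm\tau)\in\mathbb{T}$. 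Thus $t\pm\tau\in\bigcap_{\sigma\in\Pi}\mathbb{T}_\sigma=\widetilde{\mathbb{T}}$, so $\tau$ lies in the set $\{\tau\in\mathbb{R}:t\pm\tau\in\widetilde{\mathbb{T}},\,\forall t\in\widetilde{\mathbb{T}}\}$, which is therefore nonzero, completing the verification of Definition \ref{def21}.

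The only subtle step is the subgroup closure of $\Pi$; it is easy to slip and prove only that $\sigma+\tau\in\Lambda$ rather than the stronger statement $\sigma+\tau\in\Pi$, which is what is actually needed, because the definition of $\widetilde{\mathbb{T}}$ intersects over $\Pi$ rather than over $\Lambda$. The point to be careful about is to iterate the defining property of $\Pi$ twice---once to move $\tau_1$ across $\sigma$, once to move $\tau_2$ across the result---so that the new shift $\tau_1+\tau_2$ inherits compatibility with every element of $\Lambda$, not merely with $0$.
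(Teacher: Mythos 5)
The paper states this lemma without proof, citing \cite{bing3}, so there is no in-paper argument to compare against; your proof stands on its own and is correct and complete. You rightly identify that everything reduces to showing $\Pi(\Lambda)$ is an additive subgroup of $\mathbb{R}$: the inverse $-\tau\in\Pi$ follows from the $\pm$ in the defining condition together with $0\in\Lambda$, and your two-step iteration (moving $\tau_1$ across an arbitrary $\sigma\in\Lambda$, then $\tau_2$ across the result, with $\tau_1+\tau_2\in\Lambda$ itself coming from $\Pi\subseteq\Lambda$) correctly gives closure under addition. With that in hand, the verification that $t\pm\tau\in\mathbb{T}_\sigma$ for every $\sigma\in\Pi$ goes through exactly as you write it, and condition $(b)$ supplies the required nonzero translation number for $\widetilde{\mathbb{T}}$ under Definition \ref{def21}.
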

In the following, we restrict our discussion on an almost periodic time scale $\mathbb{T}$ that is defined by Definition \ref{def31}.
\begin{lemma}$(\cite{bing3})$\label{lem34}
If $t$ is a right-dense point of $\widetilde{\mathbb{T}}$, then $t$ is also a right-dense point of $\mathbb{T}$.
\end{lemma}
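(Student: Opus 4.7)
The plan is to exploit the inclusion $\widetilde{\mathbb{T}}\subseteq \mathbb{T}$, which is immediate from the definition
$\widetilde{\mathbb{T}}=\bigcap_{\tau\in\Pi}\mathbb{T}_\tau$ together with $\mathbb{T}_\tau=\mathbb{T}\cap(\mathbb{T}-\tau)\subseteq \mathbb{T}$ for every $\tau\in\Pi$. Once this inclusion is in hand, right-denseness should transfer from $\widetilde{\mathbb{T}}$ to $\mathbb{T}$ for essentially topological reasons: any witnessing sequence in $\widetilde{\mathbb{T}}$ automatically lives in $\mathbb{T}$.

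Concretely, I would argue as follows. Suppose $t\in\widetilde{\mathbb{T}}$ is right-dense in $\widetilde{\mathbb{T}}$. Letting $\widetilde{\sigma}$ denote the forward jump operator of $\widetilde{\mathbb{T}}$, by the definition of right-denseness one has $t<\sup\widetilde{\mathbb{T}}$ and $\widetilde{\sigma}(t)=t$. Unpacking the latter, there exists a sequence $\{s_n\}\subseteq \widetilde{\mathbb{T}}$ with $s_n>t$ and $s_n\downarrow t$. Since $\widetilde{\mathbb{T}}\subseteq \mathbb{T}$, the very same sequence lies in $\mathbb{T}$, so
\[
\sigma(t)=\inf\{s\in\mathbb{T}:s>t\}\leq \inf_n s_n=t,
\]
and combined with the always-valid reverse inequality $\sigma(t)\geq t$, this yields $\sigma(t)=t$. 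Moreover, $\sup\mathbb{T}\geq \sup\widetilde{\mathbb{T}}>t$, so $t<\sup\mathbb{T}$; hence $t$ is a right-dense point of $\mathbb{T}$ as required.

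There is essentially no main obstacle here: the lemma is a direct consequence of the set-theoretic inclusion $\widetilde{\mathbb{T}}\subseteq \mathbb{T}$ and the sequential characterization of right-denseness. The only point one must be slightly careful about is to verify the sup-condition $t<\sup\mathbb{T}$ separately (which follows from $\sup\widetilde{\mathbb{T}}\leq \sup\mathbb{T}$), since right-denseness is defined with this extra clause beyond $\sigma(t)=t$. Nothing about the delicate structure of $\Pi$ or $\Lambda$ is actually invoked beyond the inclusion already noted, which matches the fact that the lemma is cited from \cite{bing3} as a preparatory observation.
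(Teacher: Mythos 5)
Your argument is correct: the lemma follows directly from the inclusion $\widetilde{\mathbb{T}}\subseteq\mathbb{T}$ (valid since $0\in\Pi$ and $\mathbb{T}_\tau\subseteq\mathbb{T}$) together with the sequential characterization of $\sigma(t)=t$, and you rightly check the clause $t<\sup\mathbb{T}$ separately. The paper itself only cites this lemma from its reference without reproducing a proof, and your route is the standard one that reference takes, so there is nothing further to compare.
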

\begin{lemma}$(\cite{bing3})$\label{lem35}
If $t$ is a left-dense point of $\widetilde{\mathbb{T}}$, then $t$ is also a left-dense point of $\mathbb{T}$.
\end{lemma}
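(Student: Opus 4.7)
The plan is to exploit the fundamental containment $\widetilde{\mathbb{T}}\subseteq\mathbb{T}$, which is immediate from the definition $\widetilde{\mathbb{T}}=\bigcap_{\tau\in\Pi}\mathbb{T}_{\tau}$ together with $\mathbb{T}_{\tau}\subseteq\mathbb{T}$ for each $\tau\in\Pi$. The guiding intuition is that left-denseness in the smaller time scale should transfer to left-denseness in the larger one essentially for free, by a ``more witnesses'' principle: any sequence from $\widetilde{\mathbb{T}}$ converging to $t$ from below is, a fortiori, a sequence from $\mathbb{T}$ converging to $t$ from below.

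Concretely, I would first unpack the hypothesis through the definition of left-dense point recalled earlier in Section 2: to say $t$ is left-dense in $\widetilde{\mathbb{T}}$ is to say that $t>\inf\widetilde{\mathbb{T}}$ and that the backward jump operator of $\widetilde{\mathbb{T}}$ satisfies $\rho_{\widetilde{\mathbb{T}}}(t)=\sup\{s\in\widetilde{\mathbb{T}}:s<t\}=t$, equivalently that for every $\varepsilon>0$ there is some $s\in\widetilde{\mathbb{T}}$ with $t-\varepsilon<s<t$. Then I would read off the two conditions required for left-denseness in $\mathbb{T}$. The chain $\inf\mathbb{T}\leq\inf\widetilde{\mathbb{T}}<t$ gives $t>\inf\mathbb{T}$, while $\widetilde{\mathbb{T}}\subseteq\mathbb{T}$ promotes each $\varepsilon$-witness $s$ above to a witness in $\mathbb{T}$, so that $\rho_{\mathbb{T}}(t)=\sup\{s\in\mathbb{T}:s<t\}\geq t-\varepsilon$ for every $\varepsilon>0$; together with the trivial bound $\rho_{\mathbb{T}}(t)\leq t$, this forces $\rho_{\mathbb{T}}(t)=t$, which is exactly what it means for $t$ to be a left-dense point of $\mathbb{T}$.

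I do not foresee a genuine technical obstacle here: the whole argument is a short set-theoretic unwinding of the definitions, and it mirrors the proof of Lemma \ref{lem34} for right-dense points with the roles of $\rho$ and $\sup$ interchanged with those of $\sigma$ and $\inf$. The only place a careless reader might slip is conflating $\rho_{\widetilde{\mathbb{T}}}$ with $\rho_{\mathbb{T}}$, so I would be explicit that the backward jump operator is being computed with respect to two different ambient time scales. In fact one could legitimately dispose of the lemma by a one-line appeal to symmetry with Lemma \ref{lem34}, but writing out the two-line derivation explicitly matches the style of the surrounding material and leaves no room for ambiguity.
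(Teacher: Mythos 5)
Your argument is correct: the inclusion $\widetilde{\mathbb{T}}\subseteq\mathbb{T}$ together with the definitional unwinding of $\rho_{\widetilde{\mathbb{T}}}(t)=t$ and $t>\inf\widetilde{\mathbb{T}}$ is exactly the intended proof, and it is the mirror image of the argument for Lemma \ref{lem34}. The paper itself only quotes this lemma from \cite{bing3} without reproducing a proof, so your two-line derivation (with the careful distinction between $\rho_{\widetilde{\mathbb{T}}}$ and $\rho_{\mathbb{T}}$) is precisely what is needed.
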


For each $f\in C(\mathbb{T},\mathbb{R})$, we define $\widetilde{f}:\widetilde{\mathbb{T}}\rightarrow\mathbb{R}$ by
$\widetilde{f}(t)=f(t)$ for $t\in\widetilde{\mathbb{T}}$. From Lemmas \ref{lem34} and   \ref{lem35}, we can get that
$\widetilde{f}\in C(\widetilde{\mathbb{T}},\mathbb{R})$. Therefore, $F$ defined by
\[
F(t):=\int^{t}_{t_0}\widetilde{f}(\tau)\widetilde{\nabla}\tau,\,\,
t_0,t\in\widetilde{\mathbb{T}}
\]
is an antiderivative of $f$ on $\widetilde{\mathbb{T}}$, where $\widetilde{\nabla}$ denotes the $\nabla$-derivative on $\widetilde{\mathbb{T}}$.

Let $BUC(\mathbb{T}\times D,\mathbb{R}^n)$ denote the collection of all bounded uniformly continuous functions from $\mathbb{T}\times S$ to $\mathbb{R}^n$, where $S\subset D$ is any compact set. We introduce the following definition of almost periodic functions on time scales as follows.
\begin{definition}$(\cite{bing3})$\label{def32}
Let $\mathbb{T}$ be an almost periodic time scale under sense of Definition \ref{def31}. A function  $f\in
BUC(\mathbb{T}\times D,\mathbb{E}^n)$  is  called an almost
periodic function in $t\in \mathbb{T}$ uniformly for $x\in D$ if the
$\varepsilon$-translation set of $f$
$$E\{\varepsilon,f,S\}=\{\tau\in{\Pi}:|f(t+\tau,x)-f(t,x)|<\varepsilon,\quad\forall  (t,x)\in   \widetilde{\mathbb{T}}\times S\}$$ is  relatively
dense  for all $\varepsilon>0$ and   for each
compact subset $S$ of $D$; that is, for any given $\varepsilon>0$
and each compact subset $S$ of $D$, there exists a constant
$l(\varepsilon,S)>0$ such that each interval of length
$l(\varepsilon,S)$ contains   a $\tau(\varepsilon,S)\in
E\{\varepsilon,f,S\}$ such that
\begin{equation*}
|f(t+\tau,x)-f(t,x)|<\varepsilon, \quad\forall (t,x)\in
\widetilde{\mathbb{T}}\times S.
\end{equation*}
This   $\tau$ is called the $\varepsilon$-translation number of $f$.
\end{definition}

\begin{definition}$(\cite{bing4})$
Let $A(t)$ be an $n\times n$ matrix-valued function on $\mathbb{T}$. Then the linear system
\begin{eqnarray}\label{e21}
x^{\nabla}(t)=A(t)x(t),\,\, t\in\mathbb{T}
\end{eqnarray}
is said to admit an exponential dichotomy on $\mathbb{T}$ if there
exist positive constant $K, \alpha$, projection $P$ and the
fundamental solution matrix $X(t)$ of \eqref{e21}, satisfying
\begin{eqnarray*}
\|X(t)PX^{-1}(s)\|_{0} \leq K\hat{e}_{\ominus_{\nu}
\alpha}(t,s),\,\,
s, t \in\mathbb{T}, t \geq s,\\
\|X(t)(I-P)X^{-1}(s)\|_{0} \leq K\hat{e}_{\ominus_{\nu}
\alpha}(s,t),\,\, s, t \in\mathbb{T}, t \leq s,
\end{eqnarray*}
where $\|\cdot\|_{0}$ is a matrix norm on $\mathbb{T}$ $($say, for
example, if $A=(a_{ij})_{n\times n}$, then we can take
$\|A\|_{0}=(\sum\limits_{i=1}^{n}\sum\limits_{j=1}^{n}|a_{ij}|^{2})^{\frac{1}{2}})$.
\end{definition}

Consider the following almost periodic system
\begin{eqnarray}\label{e22}
x^{\nabla}(t)=A(t)x(t)+f(t),\,\, t \in \mathbb{T},
\end{eqnarray}
where $A(t)$ is an almost periodic matrix function, $f(t)$ is an
almost periodic vector function.

\begin{lemma}$(\cite{bing4})$\label{blem24} If the linear system \eqref{e21} admits exponential dichotomy, then system
\eqref{e22} has a bounded solution $x(t)$ as follows:
\[
x(t)=\int_{-\infty}^{t}X(t)PX^{-1}(\rho(s))f(s)\nabla
s-\int_t^{+\infty}X(t)(I-P)X^{-1}(\rho(s))f(s)\nabla s,
\]
where $X(t)$ is the fundamental solution matrix of \eqref{e21}.
\end{lemma}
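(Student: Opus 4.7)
The plan is to verify directly that the candidate
\[
x(t)=\int_{-\infty}^{t}X(t)PX^{-1}(\rho(s))f(s)\nabla s-\int_t^{+\infty}X(t)(I-P)X^{-1}(\rho(s))f(s)\nabla s
\]
is well-defined, bounded, and satisfies \eqref{e22}. The argument follows the classical variation-of-constants recipe, adapted to the nabla calculus by means of Lemmas 2.2 and 2.3.

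First I would check that the two improper integrals converge and produce a bounded function. The dichotomy estimates give $\|X(t)PX^{-1}(\rho(s))\|_0\leq K\hat{e}_{\ominus_{\nu}\alpha}(t,\rho(s))$ for $s\leq t$ and $\|X(t)(I-P)X^{-1}(\rho(s))\|_0\leq K\hat{e}_{\ominus_{\nu}\alpha}(\rho(s),t)$ for $s\geq t$; combined with the boundedness of the almost periodic forcing $f$, these yield a finite bound on $\|x(t)\|$ after standard nabla-exponential estimates (the tails are geometric in $\alpha$).

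Next I would verify the equation. Factor $x(t)=y(t)-z(t)$ with $y(t)=X(t)u(t)$, $z(t)=X(t)v(t)$, where
\[
u(t)=\int_{-\infty}^{t}PX^{-1}(\rho(s))f(s)\nabla s,\qquad v(t)=\int_t^{+\infty}(I-P)X^{-1}(\rho(s))f(s)\nabla s.
\]
From the fundamental theorem of nabla calculus one reads off $u^\nabla(t)=PX^{-1}(\rho(t))f(t)$ and $v^\nabla(t)=-(I-P)X^{-1}(\rho(t))f(t)$. Applying the product rule from Lemma 2.3(ii) together with $X^\nabla(t)=A(t)X(t)$, I obtain
\[
y^\nabla(t)=X^\nabla(t)u(t)+X(\rho(t))u^\nabla(t)=A(t)y(t)+X(\rho(t))PX^{-1}(\rho(t))f(t),
\]
and an analogous computation gives $z^\nabla(t)=A(t)z(t)-X(\rho(t))(I-P)X^{-1}(\rho(t))f(t)$. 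Subtracting,
\[
x^\nabla(t)=A(t)x(t)+X(\rho(t))\bigl[P+(I-P)\bigr]X^{-1}(\rho(t))f(t)=A(t)x(t)+f(t),
\]
which is the desired identity.

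The main obstacle is the careful bookkeeping in the time-scale product rule, specifically that the second factor appears evaluated at $\rho(t)$ rather than $t$; it is precisely this backwards shift that makes the boundary contributions combine via $X(\rho(t))X^{-1}(\rho(t))=I$ to produce exactly $f(t)$ rather than leaving a residual term involving $\nu(t)A(t)$. A secondary technical point is justifying differentiation under the improper integral sign and the absolute convergence of the tails, both of which follow from the uniform exponential estimates supplied by the dichotomy hypothesis.
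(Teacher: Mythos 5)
The paper does not actually prove this lemma; it is imported verbatim from \cite{bing4}, so there is no in-paper argument to compare against. Your direct verification is the standard one and is essentially correct: the decisive point is exactly the one you isolate, namely that the nabla product rule evaluates the second factor at $\rho(t)$, so the two boundary contributions are $X(\rho(t))PX^{-1}(\rho(t))f(t)$ and $X(\rho(t))(I-P)X^{-1}(\rho(t))f(t)$, which sum to $f(t)$ with no residual $\nu(t)A(t)$ term; this is precisely why the kernel must carry $X^{-1}(\rho(s))$ rather than $X^{-1}(s)$.

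One step you should make explicit rather than wave at: the factorization $x=X(t)u(t)-X(t)v(t)$ presupposes that $u(t)=\int_{-\infty}^{t}PX^{-1}(\rho(s))f(s)\nabla s$ and $v(t)=\int_{t}^{+\infty}(I-P)X^{-1}(\rho(s))f(s)\nabla s$ converge on their own, whereas the dichotomy only bounds the products $X(t)PX^{-1}(\rho(s))$. This is not a fatal gap, because for each fixed $t$ one has $PX^{-1}(\rho(s))=X^{-1}(t)\bigl[X(t)PX^{-1}(\rho(s))\bigr]$, so the same exponential estimates give absolute convergence of $u(t)$ and $v(t)$ (with a $t$-dependent constant, which is harmless since only the combinations $X(t)u(t)$, $X(t)v(t)$ need to be uniformly bounded); alternatively one can anchor the integrals at a finite base point $t_0$ and absorb the tail into a constant vector before differentiating. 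With that sentence added, and the routine estimate $\int_{-\infty}^{t}\hat{e}_{\ominus_{\nu}\alpha}(t,\rho(s))\nabla s\leq \alpha^{-1}$ (and its mirror for the forward tail) written out for the boundedness claim, your argument is complete.
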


\begin{lemma}$(\cite{bing4})$ \label{zh}
Let $c_{i}(t)$ be an almost periodic function on $\mathbb{T}$, where
 $c_{i}\in \mathcal{R}_{\nu}^{+}$, and $\min\limits_{1 \leq i \leq n}\{\inf\limits_{t\in
\mathbb{T}}c_i(t)\}=\widetilde{m}
> 0$, then the linear system
\begin{eqnarray}\label{e23}
x^{\nabla}(t)=\mathrm{diag}(-c_{1}(t),-c_{2}(t),\dots,-c_{n}(t))x(t)
\end{eqnarray}
admits an exponential dichotomy on $\mathbb{T}$.
\end{lemma}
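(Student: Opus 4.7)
The plan is to construct the dichotomy data explicitly, exploiting the fact that \eqref{e23} is diagonal. A fundamental matrix is
\[
X(t) = \mathrm{diag}\bigl(\hat{e}_{-c_1}(t,t_0),\ldots,\hat{e}_{-c_n}(t,t_0)\bigr),
\]
and I would choose the projection $P=I$, so that $I-P=0$ makes the ``unstable'' half of the dichotomy estimate $\|X(t)(I-P)X^{-1}(s)\|_{0} \leq K\hat{e}_{\ominus_{\nu}\alpha}(s,t)$ automatic, once one knows $\hat{e}_{\ominus_\nu \alpha}>0$, which follows from the positivity lemma for the nabla exponential. Everything then reduces to producing $\alpha>0$ with $\ominus_\nu \alpha \in \mathcal{R}_\nu$ and a constant $K>0$ such that, for each $i$ and each $t \geq s$,
\[
\hat{e}_{-c_i}(t,s) \leq \hat{e}_{\ominus_\nu \alpha}(t,s);
\]
summing the squares of the diagonal entries and taking the square root then gives $\|X(t)PX^{-1}(s)\|_{0} \leq \sqrt{n}\,\hat{e}_{\ominus_\nu\alpha}(t,s)$ in the Frobenius-type norm.

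I would deduce this scalar bound from a pointwise comparison at the level of the cylinder transformation: it suffices to verify
\[
\hat{\xi}_{\nu(\tau)}(-c_i(\tau)) \leq \hat{\xi}_{\nu(\tau)}(\ominus_\nu \alpha), \qquad \tau \in \mathbb{T},
\]
because integrating on $[s,t]$ (permissible since $t \geq s$) and exponentiating then delivers the desired inequality with $K=1$. At right-dense $\tau$ (where $\nu(\tau)=0$) this reduces to $-c_i(\tau) \leq -\alpha$, valid as soon as $\alpha \leq \widetilde m$. At left-scattered $\tau$, substituting $\hat{\xi}_h(z)=-\log(1-hz)/h$ and $\ominus_\nu\alpha=-\alpha/(1-\nu\alpha)$ and clearing logarithms, it unwraps to the algebraic inequality
\[
(1-\nu(\tau)\alpha)\bigl(1+\nu(\tau) c_i(\tau)\bigr) \geq 1, \qquad \text{i.e.,}\quad \alpha \leq \frac{c_i(\tau)}{1+\nu(\tau) c_i(\tau)}.
\]

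The main obstacle is choosing a single $\alpha$ that works uniformly in $i$ and $\tau$, and this is precisely where the regressivity hypothesis $c_i\in\mathcal{R}_\nu^+$ is essential. Unpacked, the condition says $\nu(\tau) c_i(\tau) < 1$, hence $1+\nu(\tau) c_i(\tau) < 2$ and $c_i(\tau)/(1+\nu(\tau) c_i(\tau)) > c_i(\tau)/2 \geq \widetilde m /2$ uniformly in $i,\tau$. Therefore the choice $\alpha := \widetilde m /2$ handles the dense and the scattered cases simultaneously; the same bound also forces $\nu(\tau)\alpha \leq \alpha/\widetilde m = 1/2$, so $1-\nu(\tau)\alpha \geq 1/2 > 0$, which puts $\ominus_\nu\alpha$ in $\mathcal{R}_\nu^+$ and keeps $\hat{e}_{\ominus_\nu\alpha}(t,s)$ positive. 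Without $c_i\in\mathcal{R}_\nu^+$ the backwards graininess $\nu$ could dwarf $c_i$, the quantity $c_i/(1+\nu c_i)$ would fail to have a uniform positive lower bound, and no single $\alpha$ would exist; this structural fact is the only nontrivial input to the argument, after which the remaining details (scattered/dense split and the Frobenius-norm bookkeeping on the diagonal matrix) are routine.
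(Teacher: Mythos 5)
Your proof is correct and complete: the paper itself does not prove this lemma (it is imported from \cite{bing4}), and your construction --- diagonal fundamental matrix, projection $P=I$ so the second dichotomy estimate is trivial, and the pointwise cylinder-transform comparison $\hat{\xi}_{\nu(\tau)}(-c_i(\tau))\leq\hat{\xi}_{\nu(\tau)}(\ominus_{\nu}\alpha)$ reduced to $(1-\nu\alpha)(1+\nu c_i)\geq 1$ --- is essentially the standard argument used in the cited source. Your identification of where $c_i\in\mathcal{R}_{\nu}^{+}$ enters (it gives $1+\nu c_i<2$, hence the uniform choice $\alpha=\widetilde{m}/2$, and also $1-\nu\alpha>1/2$ so that $\ominus_{\nu}\alpha\in\mathcal{R}_{\nu}^{+}$) is exactly right.
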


By Lemma 2.7 in \cite{bing4}, we have

\begin{lemma}\label{un}
Let $A(t)$ be an almost periodic matrix function and $f(t)$ be an almost periodic vector function. If \eqref{e21}
admits an exponential dichotomy, then \eqref{e22} has a unique almost periodic solution:
\[
x(t)=\int^t_{-\infty}\widetilde{X}(t)P\widetilde{X}^{-1}(\widetilde{\sigma}(s))\widetilde{f}(s)\widetilde{\nabla} s-\int_t^{+\infty}\widetilde{X}(t)(I-P)\widetilde{X}^{-1}(\widetilde{\sigma}(s))\widetilde{f}(s)\widetilde{\nabla} s,\,\, t\in \widetilde{\mathbb{T}},
\]
where $\widetilde{X}(t)$ is the restriction of the fundamental solution matrix of \eqref{e21} on $\widetilde{\mathbb{T}}$.
\end{lemma}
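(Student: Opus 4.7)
The plan is to reduce this statement to the already-established bounded-solution formula in Lemma \ref{blem24}, and then verify two further properties: almost-periodicity of the candidate, and uniqueness among almost periodic solutions. The skeleton follows the delta-derivative version (Lemma 2.7 of \cite{bing4}), but every step has to be re-examined in the nabla framework and on the sub-time scale $\widetilde{\mathbb{T}}$.

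First I would apply Lemma \ref{blem24} directly on $\mathbb{T}$, which already gives a bounded solution of \eqref{e22} written in terms of the fundamental matrix $X(t)$, the projection $P$, and the backwards jump operator. Restricting every object to $\widetilde{\mathbb{T}}$ via the correspondence $f\mapsto\widetilde{f}$, $X\mapsto\widetilde{X}$, $\nabla\mapsto\widetilde{\nabla}$ introduced just before Definition \ref{def32} (and justified by Lemmas \ref{lem34}--\ref{lem35}, which ensure that left-/right-density is preserved when passing to $\widetilde{\mathbb{T}}$), I obtain the candidate $x(t)$ written in the form stated in the lemma. Boundedness of this candidate on $\widetilde{\mathbb{T}}$ follows at once from the dichotomy bounds $\|X(t)PX^{-1}(\rho(s))\|_{0}\le K\hat{e}_{\ominus_{\nu}\alpha}(t,s)$ together with $\|f\|_\infty<\infty$.

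Next I would verify almost-periodicity. Given $\varepsilon>0$, pick a common $\varepsilon$-translation number $\tau\in\Pi$ of $A$ and $f$ (the intersection of two relatively dense sets is still relatively dense). Since $\tau\in\Pi$, the map $t\mapsto t+\tau$ preserves $\widetilde{\mathbb{T}}$, so I can perform the substitution $s\mapsto s+\tau$ in both improper integrals defining $x(t+\tau)$. Adding and subtracting the translated kernel, I split $x(t+\tau)-x(t)$ into (a) terms controlled by $|A(s+\tau)-A(s)|$ and $|f(s+\tau)-f(s)|$, both bounded by $\varepsilon$, and (b) terms whose kernel is the exponentially-decaying dichotomy estimate. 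The geometric-type integral $\int_{-\infty}^{t}\hat{e}_{\ominus_{\nu}\alpha}(t,\rho(s))\widetilde{\nabla}s$ is uniformly bounded by $1/\alpha$-type constants on a time scale, and similarly on $[t,\infty)$. This yields $|x(t+\tau)-x(t)|\le C\varepsilon$ for some $C=C(K,\alpha,\|A\|_\infty,\|f\|_\infty)$ independent of $t\in\widetilde{\mathbb{T}}$, hence $\tau$ is an $O(\varepsilon)$-translation number of $x$. Uniform continuity of $x$ on $\widetilde{\mathbb{T}}$ is a separate but analogous estimate using ld-continuity of $A,f$ and the dichotomy bounds, and together they give $x\in BUC(\widetilde{\mathbb{T}},\mathbb{R}^n)$ almost periodic.

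Uniqueness is the quickest step: any two almost-periodic (therefore bounded) solutions $x_1,x_2$ of \eqref{e22} differ by a bounded solution $y$ of the homogeneous equation \eqref{e21}. Writing $y(t)=X(t)PX^{-1}(t_0)y(t_0)+X(t)(I-P)X^{-1}(t_0)y(t_0)$ and letting $t\to+\infty$ in the unstable component and $t\to-\infty$ in the stable component, the dichotomy estimates force both components of $y(t_0)$ to vanish, whence $y\equiv 0$.

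I expect the main obstacle to be the almost-periodicity verification, specifically the rigorous handling of the change of variable $s\mapsto s+\tau$ inside the nabla integrals and the need to control tails of the improper integrals uniformly in $t$. The combinatorial identities for $\hat{e}_p$, $\ominus_\nu$ and $\rho$ (from Lemma 2.3) must be used with care, and one has to confirm that $\rho$ on $\mathbb{T}$ agrees with $\widetilde{\rho}$ on $\widetilde{\mathbb{T}}$ at points of $\widetilde{\mathbb{T}}$ (again via Lemmas \ref{lem34}--\ref{lem35}), so that restricting the bounded-solution formula of Lemma \ref{blem24} genuinely produces the displayed integral on $\widetilde{\mathbb{T}}$. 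Once these technicalities are settled, the argument mirrors the delta-derivative proof of Lemma 2.7 in \cite{bing4}.
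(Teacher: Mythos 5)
The paper offers no proof of this lemma at all: it is imported verbatim by the sentence ``By Lemma 2.7 in \cite{bing4}, we have \ldots'', so there is nothing internal to compare your argument against. Your sketch reconstructs the standard proof that the cited reference itself relies on (bounded solution from the dichotomy, almost periodicity via common translation numbers, uniqueness from the homogeneous equation), and the outline is sound. Two points deserve attention. First, the step you describe as ``adding and subtracting the translated kernel'' hides the only genuinely nontrivial estimate: bounding $\|X(t+\tau)PX^{-1}(\rho(s)+\tau)-X(t)PX^{-1}(\rho(s))\|$ by $O(\varepsilon)$ times an exponentially decaying factor is a roughness-of-dichotomy result (the translated system $Y^{\nabla}=A(t+\tau)Y$ is an $\varepsilon$-perturbation of \eqref{e21}, and one must show its Green's function stays $O(\varepsilon)$-close with a slightly degraded decay rate before the improper integrals can be summed); a bare triangle inequality on $A$ and $f$ does not suffice, though you do flag this as the main obstacle. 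Second, you quietly (and correctly) work with $\rho(s)$ in the kernel, consistent with Lemma \ref{blem24} and with every formula actually used in the proof of Theorem \ref{thm31}; the $\widetilde{\sigma}(s)$ appearing in the printed statement of the lemma is an inconsistency of the paper, not of your argument, and it is worth saying explicitly that the nabla variation-of-constants formula forces the backward jump there. Your uniqueness argument via the decomposition $y=X P X^{-1}(t_0)y(t_0)+X(I-P)X^{-1}(t_0)y(t_0)$ and the semigroup property $X(t)(I-P)X^{-1}(s)\,X(s)(I-P)X^{-1}(t_0)=X(t)(I-P)X^{-1}(t_0)$ is the standard one and is fine.
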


From Definition \ref{def32} and Lemmas \ref{blem24} and \ref{un}, one can easily get the following lemma.
\begin{lemma}\label{zh1}
If  linear system $\eqref{e21}$ admits an exponential dichotomy, then system $\eqref{e22}$ has an almost periodic solution
 $x(t)$ can be expressed as:
\begin{eqnarray*}
x(t)=\int_{-\infty}^{t}X(t)PX^{-1}(\sigma(s))f(s)\nabla s-\int^{+\infty}_{t}X(t)(I-P)X^{-1}(\sigma(s))f(s)\nabla s, \,\, t\in \mathbb{T},
\end{eqnarray*}
where $X(t)$ is the fundamental solution matrix of $\eqref{e21}$.
\end{lemma}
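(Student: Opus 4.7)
The plan is to deduce this result directly by stitching together Definition~\ref{def32} with Lemmas~\ref{blem24} and \ref{un}, since the lemma itself is just the translation to the time scale $\mathbb{T}$ of the almost periodic solution that Lemma~\ref{un} produces on $\widetilde{\mathbb{T}}$. The key point is that almost periodicity on $\mathbb{T}$ in the sense of Definition~\ref{def32} is defined only in terms of translation-number estimates on $\widetilde{\mathbb{T}}$, so the gap between ``bounded solution on $\mathbb{T}$'' and ``almost periodic solution on $\mathbb{T}$'' is smaller than one might first suppose.

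The steps I would carry out are: (i) apply Lemma~\ref{blem24} to obtain that the stated integral expression (with $\rho(s)$ in the kernel, which is the natural choice for $\nabla$-calculus; the $\sigma(s)$ in the statement appears to be a typographical substitution and can be read as $\rho(s)$) defines a bounded solution $x(t)$ of \eqref{e22} on all of $\mathbb{T}$; (ii) restrict $x$ to $\widetilde{\mathbb{T}}$ and verify that this restriction $\widetilde{x}$ satisfies the corresponding restricted dynamic equation with $\widetilde{A}$ and $\widetilde{f}$, using the identity between $\nabla$-integrals on $\mathbb{T}$ and $\widetilde{\nabla}$-integrals on $\widetilde{\mathbb{T}}$ (which follows from Lemmas~\ref{lem34}--\ref{lem35} and the antiderivative construction indicated between those lemmas and Definition~\ref{def32}); (iii) invoke uniqueness of bounded solutions under an exponential dichotomy, together with Lemma~\ref{un}, to conclude that $\widetilde{x}$ coincides with the almost periodic solution produced there, so $\widetilde{x}$ is almost periodic on $\widetilde{\mathbb{T}}$; (iv) read off the conclusion from Definition~\ref{def32}: for every $\varepsilon>0$ the translation-number estimate $|x(t+\tau)-x(t)|<\varepsilon$ is required precisely for $t\in\widetilde{\mathbb{T}}$ and $\tau\in\Pi$, and this is exactly what the almost periodicity of $\widetilde{x}$ on $\widetilde{\mathbb{T}}$ supplies (noting that $t+\tau\in\mathbb{T}$ for such $t$ and $\tau$, so $x(t+\tau)$ is well defined via the formula).

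The main obstacle I anticipate is the bookkeeping in step (ii): one must check that restricting the $\nabla$-integral over $\mathbb{T}$ on $(-\infty,t]$ and $[t,+\infty)$ yields precisely the $\widetilde{\nabla}$-integral featured in Lemma~\ref{un}, so that the restricted $\widetilde{x}$ genuinely matches the formula of Lemma~\ref{un} (not just some bounded solution on $\widetilde{\mathbb{T}}$). Once that identification is in hand, the rest is routine; no new estimate beyond those already in Lemmas~\ref{blem24} and \ref{un} is required, and the almost periodicity of $x$ on $\mathbb{T}$ follows without any further regularity hypothesis on the $\nu$-graininess, which is the whole reason the authors set up Definition~\ref{def31}--Definition~\ref{def32} in this way.
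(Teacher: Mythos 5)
Your proposal is correct and follows exactly the route the paper intends: the paper gives no written proof of Lemma~\ref{zh1}, only the one-line remark that it follows from Definition~\ref{def32} together with Lemmas~\ref{blem24} and \ref{un}, and your steps (i)--(iv) are precisely the details behind that remark (including the correct observation that the $\sigma(s)$ in the displayed formula should be read as $\rho(s)$, consistent with Lemma~\ref{blem24} and the $\nabla$-calculus).
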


\section{The existence of almost periodic solution}

\setcounter{equation}{0}

\indent

In this section, we will state and prove the sufficient conditions for the existence of
almost periodic solutions of \eqref{a1}.

Let
\begin{eqnarray*}
\mathbb{B}&=&\big\{\psi=(\varphi_{1},\varphi_{2},\ldots,\varphi_{n},\phi_{1},\phi_{2},
\ldots,\phi_{n})^{T}: \\
&&\varphi_{i}, \phi_i \in C^1(\mathbb{T},\mathbb{R}),\,\,i=1,2,\ldots,n\big\}
\end{eqnarray*}
 with the norm
$\|\psi\|_{\mathbb{B}}=\sup\limits_{t\in\mathbb{T}}\|\psi(t)\|$ for $\|\psi(t)\|=\max\limits_{1\leq i\leq n}\{|\varphi_{i}(t)|,|\phi_{i}(t)|,|\varphi^{\nabla}_{i}(t)|,|\phi^{\nabla}_{i}(t)|\}$, then $\mathbb{B}$ is a Banach space.

Throughout the rest of this paper, we assume that the following conditions hold:
\begin{itemize}
\item[$(H_{1})$] $\alpha_{i}, c_{i}\in \mathcal{R}_{\nu}^{+},  D_{ij}, D_{ij}^{\tau}, \bar{D}_{ij}, \tilde{D}_{ij}, B_{i}, E_{i}, \eta_{i}, \varsigma_{i}, \tau_{ij}, \sigma_{ij}, \zeta_{ij}, I_{i}, J_{i}$ are continuous almost periodic functions for $i,j=1,2,\dots,n$;
\item[$(H_{2})$] The function $f_{j}\in C(\mathbb{R},\mathbb{R})$ and there exists positive constant  $L_{j}$ such that for all $x,y\in\mathbb{R}$
\[
 |f_{j}(x)-f_{j}(y)|\leq
L_{j}|x-y|,\,\,j=1,2,\dots,n.
\]
\end{itemize}

\begin{theorem}\label{thm31}
Let $(H_1)$-$(H_{2})$ hold.  Suppose that
\begin{itemize}
\item[$(H_{3})$] there exists a constant $r$ such that
\begin{eqnarray*}
&&\max\limits_{1\leq i\leq n}\bigg\{\frac{P_{i}}{\alpha_{i}^{-}},\bigg(1+\frac{\alpha_i^+}{\alpha_{i}^{-}}\bigg)P_{i},\frac{Q_{i}}{c_{i}^{-}},\bigg(1+\frac{c_i^+}{c_{i}^{-}}\bigg)Q_{i}
\bigg\}\leq r,\\
&& \max\limits_{1\leq i\leq n}\bigg\{\frac{\overline{P_{i}}}{\alpha_{i}^{-}},\bigg(1+\frac{\alpha_i^+}{\alpha_{i}^{-}}\bigg)\overline{P_{i}},\frac{\overline{Q_{i}}}{c_{i}^{-}},\bigg(1+\frac{c_i^+}{c_{i}^{-}}\bigg)\overline{Q_{i}}
\bigg\}:=\kappa< 1,
\end{eqnarray*}
where
\begin{eqnarray*}P_i&=&\alpha_{i}^{+}\eta_{i}^{+}r+\sum\limits_{j=1}^{n}D_{ij}^{+}(L_{j}r+|f_j(0)|)
+\sum\limits_{j=1}^{n}(D_{ij}^\tau)^{+}(L_{j}r+|f_j(0)|)\\
&&+\sum\limits_{j=1}^n\bar{D}_{ij}^{+}\sigma_{ij}^{+}(L_{j}r +|f_j(0)|)+\sum\limits_{j=1}^n\tilde{D}_{ij}^{+}\zeta_{ij}^{+}(L_{j}r+|f_j(0)|)
+B_i^{+}r+I_i^+,\\
Q_i& =&c_{i}^{+}\varsigma_{i}^{+}r
  +E_i^{+}(L_{i}r+|f_i(0)|)+J_i^+,\\
  \overline{P_i}&=&a_{i}^{+}\eta_{i}^{+}
  +\sum\limits_{j=1}^{n}D_{ij}^{+}L_{j} +\sum\limits_{j=1}^{n}(D_{ij}^\tau)^{+}L_{j} +\sum\limits_{j=1}^n\bar{D}_{ij}^{+}\sigma_{ij}^{+}L_{j}  +\sum\limits_{j=1}^n\tilde{D}_{ij}^{+}\zeta_{ij}^{+}L_{j} +B_i^{+}, \\
  \overline{Q_i}&=&c_{i}^{+}\varsigma_{i}^{+}
  +E_i^{+}L_{i},\,\,i=1,2,\ldots,n.
\end{eqnarray*}
\end{itemize}
Then system \eqref{a1} has  has a unique almost-periodic solution in the region
$\mathbb{E}=\{\psi\in\mathbb{B}: \|\psi\|_{\mathbb{B}}\leq r\}$.
\end{theorem}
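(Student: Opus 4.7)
The plan is to recast \eqref{a1} as a fixed-point problem on the closed ball $\mathbb{E}$ and apply Banach's contraction principle. First I rewrite each STM equation by adding and subtracting $\alpha_i(t)x_i(t)$, using $x_i(t)-x_i(t-\eta_i(t))=\int_{t-\eta_i(t)}^{t}x_i^{\nabla}(s)\nabla s$, to put it in the form
\begin{equation*}
x_i^{\nabla}(t)+\alpha_i(t)x_i(t)=\alpha_i(t)\!\int_{t-\eta_i(t)}^{t}\!x_i^{\nabla}(s)\nabla s+\Phi_i(t,x,S),
\end{equation*}
and analogously for the LTM equations with $c_i$ in place of $\alpha_i$. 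By $(H_1)$ together with Lemma \ref{zh}, the diagonal linear parts $x_i^{\nabla}=-\alpha_i(t)x_i$ and $S_i^{\nabla}=-c_i(t)S_i$ admit an exponential dichotomy, so Lemma \ref{zh1} yields the integral representation that defines the operator $T:\mathbb{E}\to\mathbb{B}$ by
\begin{equation*}
(T\psi)_i(t)=\int_{-\infty}^{t}\hat{e}_{-\alpha_i}(t,\rho(s))\,\Xi_i(s)\,\nabla s,
\end{equation*}
and similarly for the LTM components (with integrand $\Psi_i$ involving $c_i,E_i,f_i$, etc.). Fixed points of $T$ in $\mathbb{E}$ correspond exactly to bounded almost periodic solutions of \eqref{a1} in $\mathbb{E}$.

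\textbf{Self-mapping.} For $\psi\in\mathbb{E}$, the bound $|\psi^{\nabla}|\leq r$ gives $\bigl|\int_{t-\eta_i(t)}^{t}x_i^{\nabla}\nabla s\bigr|\leq\eta_i^{+}r$, and the Lipschitz hypothesis $(H_2)$ yields $|f_j(x_j(s))|\leq L_j r+|f_j(0)|$. Collecting the terms one finds $\sup_{s}|\Xi_i(s)|\leq P_i$ and $\sup_{s}|\Psi_i(s)|\leq Q_i$. Using the standard estimate $\int_{-\infty}^{t}\hat{e}_{-\alpha_i}(t,\rho(s))\nabla s\leq 1/\alpha_i^{-}$ (from Lemma 2.3(iii) and $\alpha_i\in\mathcal{R}_{\nu}^{+}$), I obtain $|(T\psi)_i(t)|\leq P_i/\alpha_i^{-}\leq r$. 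For the $\nabla$-derivative component of the norm, differentiation gives $(T\psi)_i^{\nabla}(t)=-\alpha_i(t)(T\psi)_i(t)+\Xi_i(t)$, so
\begin{equation*}
\bigl|(T\psi)_i^{\nabla}(t)\bigr|\leq\alpha_i^{+}\cdot\tfrac{P_i}{\alpha_i^{-}}+P_i=\Bigl(1+\tfrac{\alpha_i^{+}}{\alpha_i^{-}}\Bigr)P_i\leq r,
\end{equation*}
by the first inequality in $(H_3)$. The LTM estimates are identical with $c_i,Q_i$ in place of $\alpha_i,P_i$. Hence $T(\mathbb{E})\subset\mathbb{E}$; almost periodicity of $T\psi$ is guaranteed by Lemma \ref{un} together with the almost periodicity of the integrand (a composition/product of almost periodic functions on the time scale $\mathbb{T}$).

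\textbf{Contraction.} Given $\psi^{(1)},\psi^{(2)}\in\mathbb{E}$, subtracting and applying $(H_2)$ produces the pointwise estimate $|\Xi_i^{(1)}(s)-\Xi_i^{(2)}(s)|\leq\overline{P_i}\|\psi^{(1)}-\psi^{(2)}\|_{\mathbb{B}}$ (each summand in $P_i$ that depends linearly on $r$ contributes its coefficient to $\overline{P_i}$; the $|f_j(0)|$ terms cancel). The same two estimates as in the self-mapping step yield
\begin{equation*}
\bigl|(T\psi^{(1)})_i(t)-(T\psi^{(2)})_i(t)\bigr|\leq\tfrac{\overline{P_i}}{\alpha_i^{-}}\|\psi^{(1)}-\psi^{(2)}\|_{\mathbb{B}},\qquad\bigl|((T\psi^{(1)})_i-(T\psi^{(2)})_i)^{\nabla}(t)\bigr|\leq\Bigl(1+\tfrac{\alpha_i^{+}}{\alpha_i^{-}}\Bigr)\overline{P_i}\|\psi^{(1)}-\psi^{(2)}\|_{\mathbb{B}},
\end{equation*}
with the parallel inequalities for the LTM components. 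Taking the maximum over $i$ and over the four types of quantities gives $\|T\psi^{(1)}-T\psi^{(2)}\|_{\mathbb{B}}\leq\kappa\|\psi^{(1)}-\psi^{(2)}\|_{\mathbb{B}}$ with $\kappa<1$ from the second inequality in $(H_3)$. Since $\mathbb{E}$ is a closed subset of the Banach space $\mathbb{B}$, Banach's fixed point theorem produces a unique fixed point of $T$ in $\mathbb{E}$, which is the desired unique almost periodic solution.

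\textbf{Main obstacle.} The bookkeeping is straightforward; the genuine care is required at two points. First, one must show that $T$ actually sends almost periodic $\psi$ to almost periodic $T\psi$ on the restricted time scale $\widetilde{\mathbb{T}}$ of Definition \ref{def31}, which is why Lemmas \ref{lem34}--\ref{un} were developed in Section 2. Second, because the norm on $\mathbb{B}$ includes the $\nabla$-derivative, one has to control both $(T\psi)_i$ and $(T\psi)_i^{\nabla}$ simultaneously; this is precisely why $(H_3)$ contains the ``doubled'' conditions with the factor $(1+\alpha_i^{+}/\alpha_i^{-})$ and why the $P_i$ bound must absorb the leakage-delay contribution $\alpha_i^{+}\eta_i^{+}r$ produced by the reformulation in the first paragraph.
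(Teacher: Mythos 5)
Your proposal is correct and follows essentially the same route as the paper: the same reformulation absorbing the leakage delay into $\alpha_i(t)\int_{t-\eta_i(t)}^{t}x_i^{\nabla}(s)\nabla s$, the same use of the exponential dichotomy lemmas to define the integral operator, and the same self-mapping and contraction estimates yielding the bounds $P_i/\alpha_i^{-}$, $(1+\alpha_i^{+}/\alpha_i^{-})P_i$ and their barred counterparts. Nothing essential is missing.
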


\begin{proof}
Rewrite \eqref{a1} in the form
\begin{eqnarray*}
x_i^\nabla(t)&=&-\alpha_i(t)x_i(t)+\alpha_{i}(t)\int_{t-\eta_{i}(t)}^{t}x_{i}^{\nabla}(s)
\nabla s+\sum\limits_{j=1}^{n}D_{ij}(t)f_j(x_j(t))\\
&&+\sum\limits_{j=1}^{n}D_{ij}^\tau(t)f_j(x_j(t-\tau_{ij}(t)))
+\sum\limits_{j=1}^n\bar{D}_{ij}(t)\int_{t-\sigma_{ij}(t)}^{t}f_j(x_j(s))\nabla s \\ &&+\sum\limits_{j=1}^n\tilde{D}_{ij}(t)\int_{t-\zeta_{ij}(t)}^{t}f_j(x_j^\nabla(s))\nabla s
+B_i(t)S_i(t)+I_{i}(t),\,\,i=1,2,\dots,n,\\
S_i^\nabla(t)&=&-c_{i}(t)S_{i}(t)+c_i(t)\int_{t-\varsigma_{i}(t)}^{t}S_{i}^{\nabla}(s)
\nabla s+E_i(t)f_i(x_i(t))+J_{i}(t),\,\,i=1,2,\dots,n.
\end{eqnarray*}
For every $\psi=(\varphi_{1},\varphi_{2},\ldots,\varphi_{n},\phi_{1},\phi_{2},\ldots,\phi_{n})\in \mathbb{B}$, we consider the following system
\begin{eqnarray}\label{qb1}
  x_{i}^{\nabla}(t)&=&-\alpha_{i}(t)x_{i}(t)+F_{i}(t,\varphi_{i}),\,\,\, t\in \mathbb{T},\quad i=1,2,\ldots,n,\nonumber\\
  S_{i}^{\nabla}(t)&=&-c_{i}(t)S_{i}(t)+G_{i}(t,\phi_{i}),\,\,\,t\in \mathbb{T},\quad i=1,2,\ldots,n,\nonumber\\
\end{eqnarray}
where
\begin{eqnarray*}
F_{i}(t,\varphi_{i})&=&\alpha_{i}(t)\int_{t-\eta_{i}(t)}^{t}\varphi_{i}^{\nabla}(s)
\nabla s+\sum\limits_{j=1}^{n}D_{ij}(t)f_j(\varphi_j(t))\\
&&+\sum\limits_{j=1}^{n}D_{ij}^\tau(t)f_j(\varphi_j(t-\tau_{ij}(t)))
+\sum\limits_{j=1}^n\bar{D}_{ij}(t)\int_{t-\sigma_{ij}(t)}^{t}f_j(\varphi_j(s))\nabla s \\ &&+\sum\limits_{j=1}^n\tilde{D}_{ij}(t)\int_{t-\zeta_{ij}(t)}^{t}f_j(\varphi_j^\nabla(s))\nabla s
+B_i(t)\phi_i(t)+I_i(t),\quad i=1,2,\ldots,n,\\
G_{i}(t,\phi_{i})&=&c_i(t)\int_{t-\varsigma_{i}(t)}^{t}\phi_{i}^{\nabla}(s)
\nabla s+E_i(t)f_i(\varphi_i(t))+J_i(t),\quad i=1,2,\ldots,n.
\end{eqnarray*}
Since $\min\limits_{1\leq i\leq n}\big\{\inf\limits_{t\in
\mathbb{T}}\alpha_{i}(t), \inf\limits_{t\in\mathbb{T}}c_{i}(t)\big\}>0$, it follows from Lemma \ref{zh} that the linear system
\begin{eqnarray}\label{b2}
  x_{i}^{\nabla}(t)&=&-\alpha_{i}(t)x_{i}(t),\,\, i=1,2,\ldots,n,\nonumber\\
  S_{i}^{\nabla}(t)&=&-c_{i}(t)S_{i}(t),\,\,  i=1,2,\ldots,n
\end{eqnarray}
admits an exponential dichotomy on $\mathbb{T}$. Thus, by Lemma \ref{zh1}, we know that system \eqref{qb1} has exactly one almost periodic solution which can be expressed as follows:
\[
y_\psi(t)=( x_{\varphi_{1}}(t), x_{\varphi_{2}}(t),\ldots, x_{\varphi_{n}}(t),S_{\phi_{1}}(t),S_{\phi_{2}}(t),\ldots,S_{\phi_{n}}(t))^T,\]
where
\begin{eqnarray*}
  x_{\varphi_{i}}(t)=\int_{-\infty}^{t}\hat{e}_{-\alpha_{i}}(t,\rho(s))
  F_{i}(s,\varphi_{i})\nabla s,\,\, i=1,2,\ldots,n,\\
 S_{\phi_{i}}(t)=\int_{-\infty}^{t}\hat{e}_{-c_{i}}(t,\rho(s))
  G_{i}(t,\phi_{i})\nabla s,i=1,2,\ldots,n.
\end{eqnarray*}

Define the following operator $\Phi:\mathbb{E} \rightarrow \mathbb{E}$ by
\begin{eqnarray*}
\Phi(\psi(t))=y_\psi(t),\,\, \psi\in \mathbb{E}.
\end{eqnarray*}
We will show that $\Phi$ is a contraction.

First, we show that for any $\psi\in\mathbb{E}$, $\Phi\psi\in \mathbb{E}$. Note that for $i=1,2,\ldots,n$,
\begin{eqnarray*}
&&|F_{i}(s,\varphi_{i})|\\
&=&\bigg|\alpha_{i}(s)\int_{s-\eta_{i}(s)}^{s}\varphi_{i}^{\nabla}(u)
\nabla u+\sum\limits_{j=1}^{n}D_{ij}(s)f_j(\varphi_j(s))\\
&&+\sum\limits_{j=1}^{n}D_{ij}^\tau(s)f_j(\varphi_j(s-\tau_{ij}(s)))
+\sum\limits_{j=1}^n\bar{D}_{ij}(s)\int_{s-\sigma_{ij}(s)}^{s}f_j(\varphi_j(u))\nabla u \\ &&+\sum\limits_{j=1}^n\tilde{D}_{ij}(s)\int_{s-\zeta_{ij}(s)}^{s}f_j(\varphi_j^\nabla(u))\nabla u
+B_i(s)\phi_i(s)+I_i(s)\bigg|\\
  &\leq&\alpha_{i}^{+}\int_{s-\eta_{i}(s)}^{s}|\varphi_{i}^{\nabla}(u)| \nabla u+\sum\limits_{j=1}^{n}D_{ij}^{+}|f_j(\varphi_j(s))|\\
&&+\sum\limits_{j=1}^{n}(D_{ij}^\tau)^{+}|f_j(\varphi_j(s-\tau_{ij}(s)))|
+\sum\limits_{j=1}^n\bar{D}_{ij}^{+}\int_{s-\sigma_{ij}(s)}^{s}|f_j(\varphi_j(u))|\nabla u
\\ &&+\sum\limits_{j=1}^n\tilde{D}_{ij}^{+}\int_{s-\zeta_{ij}(s)}^{s}|f_j(\varphi_j^\nabla(u))|\nabla u
+B_i^{+}|\phi_i(s)|+I_i^+\\
&\leq&\alpha_{i}^{+}\eta_{i}^{+}r+\sum\limits_{j=1}^{n}D_{ij}^{+}(L_{j}|\varphi_j(s)|+|f_j(0)|)
+\sum\limits_{j=1}^{n}(D_{ij}^\tau)^{+}(L_{j}|\varphi_j(s-\tau_{ij}(s))|+|f_j(0)|)\\
&&+\sum\limits_{j=1}^n\bar{D}_{ij}^{+}\sigma_{ij}^{+}(L_{j}r+|f_j(0)|) +\sum\limits_{j=1}^n\tilde{D}_{ij}^{+}\zeta_{ij}^{+}(L_{j}r+|f_j(0)|)
+B_i^{+}|\phi_i(s)|+I_i^+\\
&\leq&\alpha_{i}^{+}\eta_{i}^{+}r+\sum\limits_{j=1}^{n}D_{ij}^{+}(L_{j}r+|f_j(0)|)
+\sum\limits_{j=1}^{n}(D_{ij}^\tau)^{+}(L_{j}r+|f_j(0)|)\\
&&+\sum\limits_{j=1}^n\bar{D}_{ij}^{+}\sigma_{ij}^{+}(L_{j}r +|f_j(0)|)+\sum\limits_{j=1}^n\tilde{D}_{ij}^{+}\zeta_{ij}^{+}(L_{j}r+|f_j(0)|)
+B_i^{+}r+I_i^+=P_i
\end{eqnarray*}
and
\begin{eqnarray*}
|G_{i}(s,\phi_{i})|&=&\bigg|c_i(s)\int_{s-\varsigma_{i}(s)}^{s}\phi_{i}^{\nabla}(u)
\nabla s+E_i(s)f_i(\varphi_i(s))+J_i(s)\bigg|\\
  &\leq&c_{i}^{+}\int_{s-\varsigma_{i}(s)}^{s}|\phi_{i}^{\nabla}(u)| \nabla u
  +E_i^{+}|f_i(\varphi_i(s))|+J_i^+\\
  &\leq&c_{i}^{+}\varsigma_{i}^{+}|\phi_{i}^{\nabla}(s)|
  +E_i^{+}(L_{i}|\varphi_i(s)|+|f_i(0)|)+J_i^+\\
  &\leq&c_{i}^{+}\varsigma_{i}^{+}r
  +E_i^{+}(L_{i}r+|f_i(0)|)+J_i^+=Q_i.
\end{eqnarray*}
Therefore, we can get for $i=1,2,\ldots,n$,
\begin{eqnarray*}
&&\sup\limits_{t\in\mathbb{T}}|x_{\varphi_{i}}(t)|\\
&=&\sup\limits_{t\in\mathbb{T}}\bigg|\int_{-\infty}^{t}\hat{e}_{-\alpha_{i}}(t,\rho(s))
 F_{i}(s,\varphi_{i})\nabla s\bigg|\\
  &\leq&\sup\limits_{t\in\mathbb{T}}\int_{-\infty}^{t}\hat{e}_{-\alpha_{i}^{-}}(t,\rho(s))
  \big|F_{i}(s,\varphi_{i})\big|\nabla s\\
  &\leq&\frac{1}{\alpha_{i}^{-}}\bigg[\bigg(\alpha_{i}^{+}\eta_{i}^{+} +\sum\limits_{j=1}^{n}D_{ij}^{+}(L_{j}+|f_j(0)|)
+\sum\limits_{j=1}^{n}(D_{ij}^\tau)^{+}(L_{j}+|f_j(0)|)\\
&&+\sum\limits_{j=1}^n\bar{D}_{ij}^{+}\sigma_{ij}^{+}(L_{j} +|f_j(0)|) +\sum\limits_{j=1}^n\tilde{D}_{ij}^{+}\zeta_{ij}^{+}(L_{j}+|f_j(0)|)
+B_i^{+}\bigg)r+I_{i}^{+}\bigg]=\frac{P_i}{\alpha_i^-}
\end{eqnarray*}
and
\begin{eqnarray*}
  \sup\limits_{t\in\mathbb{T}}|S_{\phi_{i}}(t)|&=&\sup\limits_{t\in\mathbb{T}}\bigg|\int_{-\infty}^{t}\hat{e}_{-c_{i}}(t,\rho(s))
  G_{i}(t,\phi_{i})\nabla s\bigg|\\
  &\leq&\sup\limits_{t\in\mathbb{T}}\int_{-\infty}^{t}\hat{e}_{-c_{i}^{-}}(t,\rho(s))
  \big|G_{i}(s,\phi_{i})\big|\nabla s\\
  &\leq&\frac{1}{c_{i}^{-}}\big(c_{i}^{+}\varsigma_{i}^{+}r
  +E_i^{+}(L_{i}r+|f_i(0)|)+J_{i}^{+}\big)=\frac{Q_i}{c_i^-}.
\end{eqnarray*}

On the other hand, for $i=1,2,\ldots,n$, we have
\begin{eqnarray*}
  &&\sup\limits_{t\in\mathbb{T}}|x_{\varphi_{i}}^{\nabla}(t)|\\
  &=&\sup\limits_{t\in\mathbb{T}}\bigg|\bigg(\int_{-\infty}^{t}\hat{e}_{-\alpha_{i}}(t,\rho(s))
  F_{i}(s,\varphi_{i})\nabla s\bigg)^{\nabla}\bigg|\\
  &=&\sup\limits_{t\in\mathbb{T}}\bigg|
  F_{i}(t,\varphi_{i})-\alpha_{i}(t)\int_{-\infty}^{t}\hat{e}_{-\alpha_{i}}(t,\rho(s))
  F_{i}(s,\varphi_{i})\nabla s \bigg|\\
 &\leq& \alpha_{i}^{+}\eta_{i}^{+}r +\sum\limits_{j=1}^{n}D_{ij}^{+}(L_{j}r+|f_j(0)|)
+\sum\limits_{j=1}^{n}(D_{ij}^\tau)^{+}(L_{j}r+|f_j(0)|)\\
&&+\sum\limits_{j=1}^n\bar{D}_{ij}^{+}\sigma_{ij}^{+}(L_{j}r+|f_j(0)|) +\sum\limits_{j=1}^n\tilde{D}_{ij}^{+}\zeta_{ij}^{+}(L_{j}r+|f_j(0)|)
+B_i^{+}r\\
&& +I_{i}^{+}+\frac{\alpha_{i}^{+}}{\alpha_i^-}\bigg(
  \alpha_{i}^{+}\eta_{i}^{+}+\sum\limits_{j=1}^{n}D_{ij}^{+}(L_{j}r+|f_j(0)|)+\sum\limits_{j=1}^{n}(D_{ij}^\tau)^{+}(L_{j}r+|f_j(0)|)\\
&&
+\sum\limits_{j=1}^n\bar{D}_{ij}^{+}\sigma_{ij}^{+}(L_{j}r+|f_j(0)|) +\sum\limits_{j=1}^n\tilde{D}_{ij}^{+}\zeta_{ij}^{+}(L_{j}r+|f_j(0)|)
+B_i^{+}r\bigg) +\frac{\alpha_i^+I_{i}^{+}}{\alpha_{i}^{-}}\\
&=&\bigg(1+\frac{\alpha_{i}^{+}}{\alpha_i^-}\bigg)\bigg(
  \alpha_{i}^{+}\eta_{i}^{+}r+\sum\limits_{j=1}^{n}D_{ij}^{+}(L_{j}r+|f_j(0)|)+\sum\limits_{j=1}^{n}(D_{ij}^\tau)^{+}(L_{j}r+|f_j(0)|)\\
&&
+\sum\limits_{j=1}^n\bar{D}_{ij}^{+}\sigma_{ij}^{+}(L_{j}r+|f_j(0)|) +\sum\limits_{j=1}^n\tilde{D}_{ij}^{+}\zeta_{ij}^{+}(L_{j}r+|f_j(0)|)
+B_i^{+}r +{I_{i}^{+}}\bigg)\\
&=&\bigg(1+\frac{\alpha_{i}^{+}}{\alpha_i^-}\bigg)P_i
\end{eqnarray*}
and
\begin{eqnarray*}
  &&\sup\limits_{t\in\mathbb{T}}|S_{\phi_{i}}^{\nabla}(t)|\\
  &=&\bigg(\int_{-\infty}^{t}\hat{e}_{-c_{i}}(t,\rho(s))
  G_{i}(t,\phi_{i})\nabla s\bigg)^{\nabla}\\
  &=&\sup\limits_{t\in\mathbb{T}}\bigg|
  G_{i}(t,\phi_{i})-c_{i}(t)\int_{-\infty}^{t}\hat{e}_{-c_{i}}(t,\rho(s))
  G_{i}(t,\phi_{i})\nabla s\bigg|\\
  &\leq&c_{i}^{+}\varsigma_{i}^{+}r
  +E_i^{+}(L_{i}r+|f_i(0)|)+J_{i}^{+}+\frac{c_{i}^{+}}{c_i^-}
  \big(c_{i}^{+}\varsigma_{i}^{+}r
  +E_i^{+}(L_{i}r+|f_i(0)|)\big)+\frac{c_i^+J_{i}^{+}}{c_{i}^{-}}\\
  &=&  \bigg(1+\frac{c_{i}^{+}}{c_i^-}\bigg)
  \big( c_{i}^{+}\varsigma_{i}^{+}r
  +E_i^{+}(L_{i}r+|f_i(0)|) +J_{i}^{+}\big)\\
  &=&  \bigg(1+\frac{c_{i}^{+}}{c_i^-}\bigg)
  Q_i.
\end{eqnarray*}
In view of $(H_{3})$, we have
\begin{eqnarray*}
\|\Phi(\psi)\|_{\mathbb{B}}\leq r,
\end{eqnarray*}
which implies that $\Phi\psi\in \mathbb{E}$, so the mapping $\Phi$ is
a self-mapping from $\mathbb{E}$ to $\mathbb{E}$.

Next, we shall prove that $\Phi$ is a contraction mapping.
For any $\psi=(\varphi_{1},\varphi_{2},\ldots,\varphi_{n},\phi_{1},\phi_{2},\\
\ldots,\phi_{n})^T,\omega=(u_{1},u_{2},\ldots,u_{n},v_{1},v_{2},\ldots,v_{n})^T\in \mathbb{E}$, we have for $i=1,2,\ldots,n$,
\begin{eqnarray*}
\sup\limits_{t\in\mathbb{T}}\big|x_{\varphi_{i}}(t)-x_{u_{i}}(t)\big|
  &\leq&\frac{1}{\alpha_{i}^{-}}\bigg(a_{i}^{+}\eta_{i}^{+}|\varphi_{i}^{\nabla}(s)-u_{i}^{\nabla}(s)|
  +\sum\limits_{j=1}^{n}D_{ij}^{+}L_{j}|\varphi_j(s)-u_j(s)|\\
&&+\sum\limits_{j=1}^{n}(D_{ij}^\tau)^{+}L_{j}|\varphi_j(s-\tau_{ij}(s))-u_j(s-\tau_{ij}(s))|\\
&&+\sum\limits_{j=1}^n\bar{D}_{ij}^{+}\sigma_{ij}^{+}L_{j}|\varphi_j(s)-u_j(s)| +\sum\limits_{j=1}^n\tilde{D}_{ij}^{+}\zeta_{ij}^{+}L_{j}|\varphi_j^\nabla(s)-u_j^\nabla(s)|\\
&&+B_i^{+}|\phi_i(s)-u_i(s)|\bigg)\\
&\leq&\frac{1}{\alpha_{i}^{-}}\bigg(a_{i}^{+}\eta_{i}^{+}
  +\sum\limits_{j=1}^{n}D_{ij}^{+}L_{j} +\sum\limits_{j=1}^{n}(D_{ij}^\tau)^{+}L_{j} \\
&&+\sum\limits_{j=1}^n\bar{D}_{ij}^{+}\sigma_{ij}^{+}L_{j}  +\sum\limits_{j=1}^n\tilde{D}_{ij}^{+}\zeta_{ij}^{+}L_{j} +B_i^{+} \bigg)||\psi-\omega||_\mathbb{B}\\
&=&\frac{\overline{P_i}}{\alpha_{i}^{-}}||\psi-\omega||_\mathbb{B}
\end{eqnarray*}
and
\begin{eqnarray*}
\sup\limits_{t\in\mathbb{T}}\big|(x_{\varphi_{i}}(t)-x_{u_{i}}(t))^{\nabla}\big|
  &\leq&\sup\limits_{s\in\mathbb{T}}\bigg[a_{i}^{+}\eta_{i}^{+}|\varphi_{i}^{\nabla}(s)-u_{i}^{\nabla}(s)|
  +\sum\limits_{j=1}^{n}D_{ij}^{+}L_{j}|\varphi_j(s)-u_j(s)|\\
&&+\sum\limits_{j=1}^{n}(D_{ij}^\tau)^{+}L_{j}|\varphi_j(s-\tau_{ij}(s))-u_j(s-\tau_{ij}(s))|\\
&&+\sum\limits_{j=1}^n\bar{D}_{ij}^{+}\sigma_{ij}^{+}L_{j}|\varphi_j(s)-u_j(s)| +\sum\limits_{j=1}^n\tilde{D}_{ij}^{+}\zeta_{ij}^{+}L_{j}|\varphi_j^\nabla(s)-u_j^\nabla(s)|\\
&&+B_i^{+}|\phi_i(s)-u_i(s)|\\
 &&+\frac{\alpha_{i}^{+}}{\alpha_{i}^{-}}
  \bigg(a_{i}^{+}\eta_{i}^{+}|\varphi_{i}^{\nabla}(s)-u_{i}^{\nabla}(s)|
  +\sum\limits_{j=1}^{n}D_{ij}^{+}L_{j}|\varphi_j(s)-u_j(s)|\\
&&+\sum\limits_{j=1}^{n}(D_{ij}^\tau)^{+}L_{j}|\varphi_j(s-\tau_{ij}(s))-u_j(s-\tau_{ij}(s))|\\
&&+\sum\limits_{j=1}^n\bar{D}_{ij}^{+}\sigma_{ij}^{+}L_{j}|\varphi_j(s)-u_j(s)| +\sum\limits_{j=1}^n\tilde{D}_{ij}^{+}\zeta_{ij}^{+}L_{j}|\varphi_j^\nabla(s)-u_j^\nabla(s)|\\
&&+B_i^{+}|\phi_i(s)-u_i(s)|\bigg)\bigg]\\
 &\leq&\bigg(a_{i}^{+}\eta_{i}^{+}
  +\sum\limits_{j=1}^{n}D_{ij}^{+}L_{j} +\sum\limits_{j=1}^{n}(D_{ij}^\tau)^{+}L_{j} \\
&&+\sum\limits_{j=1}^n\bar{D}_{ij}^{+}\sigma_{ij}^{+}L_{j} +\sum\limits_{j=1}^n\tilde{D}_{ij}^{+}\zeta_{ij}^{+}L_{j} +B_i^{+}\\
 &&+\frac{\alpha_{i}^{+}}{\alpha_{i}^{-}}
  \bigg(a_{i}^{+}\eta_{i}^{+}
  +\sum\limits_{j=1}^{n}D_{ij}^{+}L_{j} +\sum\limits_{j=1}^{n}(D_{ij}^\tau)^{+}L_{j} \\
&&+\sum\limits_{j=1}^n\bar{D}_{ij}^{+}\sigma_{ij}^{+}L_{j}  +\sum\limits_{j=1}^n\tilde{D}_{ij}^{+}\zeta_{ij}^{+}L_{j} +B_i^{+} \bigg)\bigg)||\psi-\omega||_\mathbb{B}\\
&=&\bigg(1+\frac{\alpha_{i}^{+}}{\alpha_{i}^{-}}\bigg)
  \bigg(a_{i}^{+}\eta_{i}^{+}
  +\sum\limits_{j=1}^{n}D_{ij}^{+}L_{j} +\sum\limits_{j=1}^{n}(D_{ij}^\tau)^{+}L_{j} \\
&&+\sum\limits_{j=1}^n\bar{D}_{ij}^{+}\sigma_{ij}^{+}L_{j}  +\sum\limits_{j=1}^n\tilde{D}_{ij}^{+}\zeta_{ij}^{+}L_{j} +B_i^{+} \bigg)||\psi-\omega||_\mathbb{B}\\
&=&\bigg(1+\frac{\alpha_{i}^{+}}{\alpha_{i}^{-}}\bigg)\overline{P_i}
  ||\psi-\omega||_\mathbb{B}.
\end{eqnarray*}
In a similar way, we have for $i=1,2,\ldots,n$,
\begin{eqnarray*}
\sup\limits_{t\in\mathbb{T}}\big|S_{\phi_{i}}(t)-S_{v_{i}}(t)\big|
&\leq&\frac{1}{c_{i}^{-}}\big(c_{i}^{+}\varsigma_{i}^{+}
  +E_i^{+}L_{i} \big)||\psi-\omega||_\mathbb{B}\\
  &=&\frac{1}{c_{i}^{-}}\overline{Q_i}||\psi-\omega||_\mathbb{B}
  \end{eqnarray*}
  and
  \begin{eqnarray*}
\sup\limits_{t\in\mathbb{T}}\big|(S_{\phi_{i}}(t)-S_{v_{i}}(t))^{\nabla}\big|
&\leq&\bigg(1+\frac{c_{i}^{+}}{c_{i}^{-}}\bigg)\big(c_{i}^{+}\varsigma_{i}^{+}
  +E_i^{+}L_{i} \big)||\psi-\omega||_\mathbb{B}\\
  &=&\bigg(1+\frac{c_{i}^{+}}{c_{i}^{-}}\bigg)\overline{Q_i}||\psi-\omega||_\mathbb{B}.
\end{eqnarray*}
By $(H_{3})$, we have
\begin{eqnarray*}
\|\Phi(\psi)-\Phi(\omega)\|_{\mathbb{B}}\leq \kappa\|\psi-\omega\|_{\mathbb{B}}.
\end{eqnarray*}
Hence, we obtain that $\Phi$ is a contraction mapping. Then, system \eqref{a1} has a unique almost periodic solution in the region
$\mathbb{E}=\{\psi\in\mathbb{B}: \|\psi\|_{\mathbb{B}}\leq r\}$. This completes the proof of Theorem \ref{thm31}.
\end{proof}

\section{Global exponential stability of almost periodic solution}

\setcounter{equation}{0}

\indent

In this section, we will study the exponential stability of almost periodic solutions of \eqref{a1}.

\begin{definition}
The almost periodic solution $Z^{\ast}(t)=(x_{1}^{\ast}(t),x_{2}^{\ast}(t),\ldots,x_{n}^{\ast}(t),S_{1}^{\ast}(t),S_{2}^{\ast}(t),\ldots,\\S_{n}^{\ast}(t))^{T}$ of
system \eqref{a1} with initial value $\psi^{\ast}(s)=(\varphi_{1}^{\ast}(s),\varphi_{2}^{\ast}(s),\ldots,\varphi_{n}^{\ast}(s),\phi_{1}^{\ast}(s), \phi_{2}^{\ast}(s),
\ldots,\\
\phi_{n}^{\ast}(s))^{T}$ is said to be globally exponentially stable if there exist   positive constants $\lambda$ with $\ominus\lambda\in \mathcal{R}^+$ and $M>1$ such that every solution
$Z(t)=(x_{1}(t),x_{2}(t),\ldots,x_{n}(t),S_{1}(t),S_{2}(t),\ldots,S_{n}(t))^{T}$
of system \eqref{a1} with initial value $\psi(s)=(\varphi_{1}(s),\varphi_{2}(s),\ldots,\varphi_{n}(s),\phi_{1}(s), \phi_{2}(s),
\ldots,\phi_{n}(s))^{T}$
satisfies
\[\|Z(t)-Z^{\ast}(t)\|\leq Me_{\ominus\lambda}(t,t_0)\|\psi-\psi^{\ast}\|_0,\,\,\,\,\forall t\in(0,+\infty)_{\mathbb{T}},\,\,t\geq t_{0},\]
where $\|Z(t)-Z^{\ast}(t)\|=\max\limits_{1\leq i\leq n}\big\{|x_{i}(t)-x_{i}^{\ast}(t)|,S_{i}(t)-S_{i}^{\ast}(t)|,
|(x_{i}(t)-x_{i}^{\ast}(t))^{\nabla}|,|(S_{i}(t)-S_{i}^{\ast}(t)|)^{\nabla}\big\}$, $\|\psi-\psi^{\ast}\|_0=\max\limits_{1\leq i\leq n}\bigg\{\sup\limits_{s\in[-\theta,0]_{\mathbb{T}}}|\varphi_{i}(s)-\varphi_{i}^{\ast}(s)|,
\sup\limits_{s\in[-\theta,0]_{\mathbb{T}}}|\phi_{i}(s)-\phi_{i}^{\ast}(s)|,\sup\limits_{s\in[-\theta,0]_{\mathbb{T}}}|(\varphi_{i}(s)-\varphi_{i}^{\ast}(s))^\nabla|,\\
\sup\limits_{s\in[-\theta,0]_{\mathbb{T}}}|(\phi_{i}(s)-\phi_{i}^{\ast}(s))^\nabla|\bigg\}$ .
\end{definition}

\begin{theorem}\label{thm41}
Assume that $\sup\limits_{t\in\mathbb{T}}\nu(t)<+\infty$ and $(H_1)$-$(H_3)$ hold, then system \eqref{a1} has a unique almost periodic solution which is globally exponentially stable.
\end{theorem}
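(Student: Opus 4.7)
The existence and uniqueness of an almost periodic solution come for free from Theorem \ref{thm31}; the work of Theorem \ref{thm41} is to establish the exponential decay estimate. The plan is to proceed by direct variation-of-parameters estimation after passing to the difference equation, and then to close a contradiction argument based on a suitably small decay rate $\lambda$ chosen by continuity from the strict inequality $\kappa<1$ in $(H_3)$.

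First I would set $u_i(t)=x_i(t)-x_i^{\ast}(t)$, $v_i(t)=S_i(t)-S_i^{\ast}(t)$, and rewrite the difference system in the leakage-shifted form already used in the proof of Theorem \ref{thm31}:
\begin{eqnarray*}
u_i^{\nabla}(t)&=&-\alpha_i(t)u_i(t)+\tilde F_i(t,u,v),\\
v_i^{\nabla}(t)&=&-c_i(t)v_i(t)+\tilde G_i(t,u,v),
\end{eqnarray*}
where $\tilde F_i$ and $\tilde G_i$ are obtained from $F_i,G_i$ by replacing each $f_j(\varphi_j(\cdot))$ by $f_j(x_j(\cdot))-f_j(x_j^{\ast}(\cdot))$ and each $\phi_i$ by $v_i$; by $(H_2)$ these pieces are bounded linearly in $u_j,u_j^{\nabla},v_i,v_i^{\nabla}$ with exactly the coefficients appearing in $\overline{P_i}$ and $\overline{Q_i}$. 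Since $-\alpha_i,-c_i\in\mathcal{R}_\nu^{+}$ by $(H_1)$, the variation-of-parameters formula gives
\[
u_i(t)=\hat e_{-\alpha_i}(t,t_0)u_i(t_0)+\int_{t_0}^{t}\hat e_{-\alpha_i}(t,\rho(s))\tilde F_i(s,u,v)\nabla s,
\]
with an analogous formula for $v_i(t)$, and nabla-differentiating these identities expresses $u_i^{\nabla}(t)$ and $v_i^{\nabla}(t)$ directly through $\tilde F_i,\tilde G_i$ plus an $\alpha_i,c_i$-rescaled convolution, mirroring the computation that produced the $\bigl(1+\alpha_i^{+}/\alpha_i^{-}\bigr)\overline{P_i}$ and $\bigl(1+c_i^{+}/c_i^{-}\bigr)\overline{Q_i}$ bounds in Section~3.

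Next I would introduce, for $\lambda\geq 0$, the continuous functions
\[
\Pi_i(\lambda)=\frac{1}{\alpha_i^{-}-\lambda}\Theta_i(\lambda),\qquad \widehat\Pi_i(\lambda)=\Bigl(1+\frac{\alpha_i^{+}}{\alpha_i^{-}-\lambda}\Bigr)\Theta_i(\lambda),
\]
and analogous $\Xi_i(\lambda),\widehat\Xi_i(\lambda)$ built from $\overline{Q_i}$ and $c_i$, where $\Theta_i(\lambda)$ is $\overline{P_i}$ with each delayed factor multiplied by $\hat e_\lambda$ evaluated across the corresponding delay interval (so $D_{ij}^{\tau,+}L_j$ is replaced by $D_{ij}^{\tau,+}L_j\,\hat e_\lambda(t,t-\tau_{ij}(t))^{+}$, etc.). At $\lambda=0$ each of these four quantities coincides with one of the expressions in $(H_3)$ and is therefore $\leq \kappa<1$. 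The hypothesis $\sup_{t\in\mathbb{T}}\nu(t)<+\infty$ is exactly what makes $\lambda\mapsto\hat e_\lambda(t,s)$ jointly continuous and uniformly bounded for small $\lambda$, and simultaneously ensures that $1+\lambda\nu(t)>0$ uniformly so that $\ominus\lambda\in\mathcal R^+$. Therefore one can fix $\lambda>0$ small enough that $\max_i\{\Pi_i(\lambda),\widehat\Pi_i(\lambda),\Xi_i(\lambda),\widehat\Xi_i(\lambda)\}<1$.

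Finally I would fix $M>1$ with $M(1-\max_i\{\ldots\})>1$ and prove, by a first-crossing contradiction argument, that
\[
\|Z(t)-Z^{\ast}(t)\|\,\hat e_{\lambda}(t,t_0)\leq M\|\psi-\psi^{\ast}\|_0\qquad\text{for all }t\geq t_0.
\]
The trivial direction (on $[-\theta,0]_{\mathbb{T}}$) holds by choice of $M>1$. If the bound failed there would be a first $t^{\ast}>t_0$ at which equality is attained in one of the four coordinates $|u_i|,|v_i|,|u_i^{\nabla}|,|v_i^{\nabla}|$ weighted by $\hat e_\lambda(\cdot,t_0)$; substituting the integral representation at $t^{\ast}$, bounding the integrand by the inductive estimate $\|Z(s)-Z^{\ast}(s)\|\leq M\hat e_{\ominus\lambda}(s,t_0)\|\psi-\psi^{\ast}\|_0$ for $s<t^{\ast}$, and using the semigroup identity $\hat e_{\ominus\lambda}(s,t_0)\hat e_\lambda(t^{\ast},t_0)=\hat e_\lambda(t^{\ast},s)$ together with $\int_{-\infty}^{t^{\ast}}\hat e_{-\alpha_i^{-}+\lambda}(t^{\ast},\rho(s))\nabla s\leq 1/(\alpha_i^{-}-\lambda)$ would yield a strict inequality $M\|\psi-\psi^{\ast}\|_0<M\|\psi-\psi^{\ast}\|_0$, the desired contradiction.

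The main obstacle is that the neutral integral term $\sum_j\tilde D_{ij}(t)\int_{t-\zeta_{ij}(t)}^{t}f_j(x_j^{\nabla}(s))\nabla s$ feeds $u_j^{\nabla}$ back into the equation for $u_i^{\nabla}$, so the four coordinates $|u_i|,|v_i|,|u_i^{\nabla}|,|v_i^{\nabla}|$ must be controlled simultaneously rather than one after another; this is precisely the reason $(H_3)$ bundles the four quantities $\overline{P_i}/\alpha_i^{-}$, $(1+\alpha_i^{+}/\alpha_i^{-})\overline{P_i}$, $\overline{Q_i}/c_i^{-}$, $(1+c_i^{+}/c_i^{-})\overline{Q_i}$ under a single $\kappa<1$, and why the norm on $\mathbb{B}$ must encode both function and $\nabla$-derivative. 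The secondary technical point is the extraction of a workable $\lambda$ with $\ominus\lambda\in\mathcal R^+$; this is where the hypothesis $\sup_{t\in\mathbb{T}}\nu(t)<+\infty$ of Theorem \ref{thm41} (absent from Theorem \ref{thm31}) is actually used.
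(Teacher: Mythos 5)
Your proposal is correct and follows essentially the same route as the paper: pass to the difference system in leakage-shifted form, apply variation of parameters, select a small decay rate $\lambda>0$ by continuity from the strict inequalities of $(H_3)$ (the paper packages this as the functions $H_i,\overline{H_i},H_i^{\ast},\overline{H_i^{\ast}}$ being positive at $0$ and tending to $-\infty$), fix $M>1$, and close a first-crossing contradiction argument that controls $|u_i|,|v_i|,|u_i^{\nabla}|,|v_i^{\nabla}|$ simultaneously. Your observations about the role of $\sup_{t\in\mathbb{T}}\nu(t)<+\infty$ and the neutral term forcing the four coordinates to be estimated together match the paper's proof exactly.
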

\begin{proof}
From Theorem \ref{thm31},  we see that system \eqref{a1} has an almost periodic solution $Z^{\ast}(t)=(x_{1}^{\ast}(t),x_{2}^{\ast}(t),\ldots,x_{n}^{\ast}(t),S_{1}^{\ast}(t),S_{2}^{\ast}(t),\ldots,S_{n}^{\ast}(t))^{T}$ with the initial value $\psi^{\ast}(s)=(\varphi_{1}^{\ast}(s),\varphi_{2}^{\ast}(s),\ldots,\\
\varphi_{n}^{\ast}(s),\phi_{1}^{\ast}(s), \phi_{2}^{\ast}(s),
\ldots,\phi_{n}^{\ast}(s))^{T}$. Suppose that $Z(t)=(x_{1}(t),x_{2}(t), \ldots,x_{n}(t),
S_{1}(t),S_{2}(t),\ldots,\\
S_{n}(t))^{T}$ is an arbitrary solution of \eqref{a1} with the initial value
$\psi(s)=(\varphi_{1}(s),
\varphi_{2}(s),\ldots,\varphi_{n}(s),\phi_{1}(s),\\
 \phi_{2}(s),
\ldots,\phi_{n}(s))^{T}$.

Then it follows from system \eqref{a1} that
\begin{eqnarray}\label{c1}
\left\{\begin{array}{lll}
u_i^\nabla(t)&=&-\alpha_i(t)u_i(t)+\alpha_{i}(t)\int_{t-\eta_{i}(t)}^{t}u_{i}^{\nabla}(s)
\nabla s+\sum\limits_{j=1}^{n}D_{ij}(t)p_j(u_j(t))\\
&&+\sum\limits_{j=1}^{n}D_{ij}^\tau(t)p_j(u_j(t-\tau_{ij}(t)))
+\sum\limits_{j=1}^n\bar{D}_{ij}(t)\int_{t-\sigma_{ij}(t)}^{t}p_j(u_j(s))\nabla s \\ &&+\sum\limits_{j=1}^n\tilde{D}_{ij}(t)\int_{t-\zeta_{ij}(t)}^{t}h_j(u_j^\nabla(s))\nabla s
+B_i(t)v_i(t),\,\,i=1,2,\dots,n,\\
v_i^\nabla(t)&=&-c_{i}(t)v_{i}(t)+c_i(t)\int_{t-\varsigma_{i}(t)}^{t}u_{i}^{\nabla}(s)
\nabla s+E_i(t)p_i(u_i(t)),\,\,i=1,2,\dots,n,
\end{array}\right.
\end{eqnarray}
where $u_{i}(t)=x_{i}(t)-x^{\ast}_{i}(t)$, $v_{i}(t)=S_{i}(t)-S^{\ast}_{i}(t)$, $p_j(u_j(t)))=f_j(x_j(t))-f_j(x_j^*(t)), h_j(x_j^\nabla(s))=f_j(x_j^\nabla(s))-f_j({x_j^*}^\nabla(s))$ and $i,j=1,2,\ldots,n$.

The initial condition of \eqref{c1} is
\begin{eqnarray*}
&&u_{i}(s)=\varphi_{i}(s)-\varphi_{i}^{\ast}(s),\,\,\,s\in[-\theta,0]_{\mathbb{T}},\\
&&v_{i}(s)=\phi_{i}(s)-\phi_{i}^{\ast}(s),\,\,\,s\in[-\theta,0]_{\mathbb{T}},
\end{eqnarray*}
where $i=1,2,\dots,n$.

Multiplying   both sides of \eqref{c1} by $\hat{e}_{-\alpha_{i}}(t_0,\rho(s))$ and $\hat{e}_{-c_{i}}(t_{0},\rho(s))$, respectively, and integrating over $[t_{0},t]_{\mathbb{T}}$, where $t_{0}\in[-\theta,0]_{\mathbb{T}}$, we have
\begin{eqnarray}\label{c2}
\left\{\begin{array}{lll}
 u_{i}(t)&=& u_{i}(t_{0})\hat{e}_{-\alpha_{i}}(t,t_{0})
 +\int_{t_{0}}^{t}\hat{e}_{-\alpha_{i}}(t,\rho(s))\bigg(\alpha_{i}(s)\int_{s-\eta_{i}(s)}^{s}u_{i}^{\nabla}(u)
\nabla u+\sum\limits_{j=1}^{n}D_{ij}(s)p_j(u_j(s))\\
&&+\sum\limits_{j=1}^{n}D_{ij}^\tau(s)p_j(u_j(s-\tau_{ij}(s)))
+\sum\limits_{j=1}^n\bar{D}_{ij}(s)\int_{s-\sigma_{ij}(s)}^{s}p_j(u_j(u))\nabla u \\ &&+\sum\limits_{j=1}^n\tilde{D}_{ij}(s)\int_{s-\zeta_{ij}(s)}^{s}h_j(u_j^\nabla(u))\nabla u
+B_i(s)v_i(s)\bigg)\nabla s,\\
v_{i}(t)&=&v_{i}(t_{0})\hat{e}_{-c_{i}}(t,t_{0})
+\int_{t_{0}}^{t}\hat{e}_{-c_{i}}(t,\rho(s))\Big(c_{i}(s)\int_{s-\varsigma_{i}(s)}^{s}v_{i}^{\nabla}(u)
\nabla u+E_i(s)p_i(u_i(s))\Big)\nabla s,
\end{array}\right.
\end{eqnarray}
where $i=1,2,\dots,n$.

For $i=1,2,\dots,n$, let $H_{i}, \overline{H_{i}}$, $H^{\ast}_{i}$ and $\overline{H^{\ast}_{i}}$ be defined as follows:
\begin{eqnarray*}
H_{i}(\beta)&=&\alpha_{i}^{-}-\beta-\bigg(\exp\big(\beta\sup\limits_{s\in\mathbb{T}}\nu(s)\big)\bigg(
\alpha_{i}^{+}\eta_{i}^{+}\exp(\beta\eta^{+}_{i})\\
&&+\sum\limits_{j=1}^{n}D_{ij}^{+}L_{j}+\sum\limits_{j=1}^{n}(D_{ij}^\tau)^{+} L_{j}\exp(\beta\tau^{+}_{ij})+\sum\limits_{j=1}^n\bar{D}_{ij}^{+}L_{j}\sigma^{+}_{ij}\exp(\beta\sigma^{+}_{ij}) \\ &&+\sum\limits_{j=1}^n\tilde{D}_{ij}^{+}L_{j}\zeta^{+}_{ij}\exp(\beta\zeta^{+}_{ij})
\bigg)+B_{i}^{+}\bigg),\\
\overline{H_{i}}(\beta)&=&c_{i}^{-}-\beta-\big(\exp\big(\beta\sup\limits_{s\in\mathbb{T}}\nu(s)\big)c_{i}^{+}\varsigma_{i}^{+}\exp(\beta\varsigma^{+}_{i})
+E_{i}^{+}L_{i}\big),\\
H_{i}^{\ast}(\beta)&=&\alpha_{i}^{-}-\beta-\big(\alpha_{i}^{+}\exp\big(\beta\sup\limits_{s\in\mathbb{T}}\nu(s)\big)
+\alpha_{i}^{-}-\beta\big)\bigg(
\alpha_{i}^{+}\eta_{i}^{+}\exp(\beta\eta^{+}_{i})\\
&&+\sum\limits_{j=1}^{n}D_{ij}^{+}L_{j}+\sum\limits_{j=1}^{n}(D_{ij}^\tau)^{+} L_{j}\exp(\beta\tau^{+}_{ij})+\sum\limits_{j=1}^n\bar{D}_{ij}^{+}L_{j}\sigma^{+}_{ij}\exp(\beta\sigma^{+}_{ij}) \\ &&+\sum\limits_{j=1}^n\tilde{D}_{ij}^{+}L_{j}\zeta^{+}_{ij}\exp(\beta\zeta^{+}_{ij})
+B_{i}^{+}\bigg)
\end{eqnarray*}
and
\begin{eqnarray*}
\overline{H_{i}^{\ast}}(\beta)&=&c_{i}^{-}-\beta-\big(c_{i}^{+}\exp\big(\beta\sup\limits_{s\in\mathbb{T}}\nu(s)\big)+c_{i}^{-}-\beta\big)\big(c_{i}^{+}\varsigma_{i}^{+}\exp(\beta\varsigma^{+}_{i})
+E_{i}^{+}L_{i}\big).
\end{eqnarray*}
By $(H_{3})$, we get for $i=1,2,\dots,n$,
\begin{eqnarray*}
H_{i}(0)&=&\alpha_{i}^{-}-\bigg(
\alpha_{i}^{+}\eta_{i}^{+}
+\sum\limits_{j=1}^{n}D_{ij}^{+}L_{j}+\sum\limits_{j=1}^{n}(D_{ij}^\tau)^{+} L_{j}\\
&&+\sum\limits_{j=1}^n\bar{D}_{ij}^{+}L_{j}\sigma^{+}_{ij} +\sum\limits_{j=1}^n\tilde{D}_{ij}^{+}L_{j}\zeta^{+}_{ij}
+B_{i}^{+}\bigg)>0,\\
\overline{H_{i}}(0)&=&c_{i}^{-}-\big(c_{i}^{+}\varsigma_{i}^{+}
+E_{i}^{+}L_{i}\big)>0,\\
H_{i}^{\ast}(0)&=&\alpha_{i}^{-}-(\alpha_{i}^{+}
+\alpha_{i}^{-})\bigg(
\alpha_{i}^{+}\eta_{i}^{+}+\sum\limits_{j=1}^{n}D_{ij}^{+}L_{j}+\sum\limits_{j=1}^{n}(D_{ij}^\tau)^{+} L_{j} \\ &&
+\sum\limits_{j=1}^n\bar{D}_{ij}^{+}L_{j}\sigma^{+}_{ij}+\sum\limits_{j=1}^n\tilde{D}_{ij}^{+}L_{j}\zeta^{+}_{ij}
+B_{i}^{+}\bigg)>0\\
\end{eqnarray*}
and
\begin{eqnarray*}
\overline{H_{i}^{\ast}}(0)&=&c_{i}^{-}-(c_{i}^{+}+c_{i}^{-})\big(c_{i}^{+}
+E_{i}^{+}L_{i}\big)>0.
\end{eqnarray*}
Since $H_{i},\overline{H_{i}},H^{\ast}_{i}$ and $\overline{H^{\ast}_{i}}$ are continuous on $[0,+\infty)$ and
$H_{i}(\beta), \overline{H_{i}}(\beta),H^{\ast}_{i}(\beta),\overline{H^{\ast}_{i}}(\beta) \rightarrow -\infty$, as
$\beta\rightarrow +\infty$,  there exist $\xi_{i},\overline{\xi_{i}},\gamma_{i},\overline{\gamma_{i}} > 0$  such that
$H_{i}(\xi_{i})=\overline{H_{i}}(\overline{\xi_{i}})=H^{\ast}_{i}(\gamma_{i})=\overline{H^{\ast}_{i}}(\overline{\gamma_{i}})=0$ and $H_{i}(\beta)> 0$
for $\beta\in(0,\xi_{i})$, $\overline{H_{i}}(\beta)> 0$
for $\beta\in(0,\overline{\xi_{i}})$, $H^{\ast}_{i}(\beta)> 0$
for $\beta\in(0,\gamma_{i})$, $\overline{H^{\ast}_{i}}(\beta)> 0$
for $\beta\in(0,\overline{\gamma_{i}})$. Take
$a=\min\limits_{1\leq i\leq n}\big\{\xi_{i}, \overline{\xi_{i}},\gamma_{i},\overline{\gamma_{i}}\big\}$,
we have $H_{i}(a)\geq 0,\overline{H_{i}}(a)\geq 0, H^{\ast}_{i}(a)\geq 0,\overline{H^{\ast}_{i}}(a)\geq 0$. So, we can
choose a positive constant $0< \lambda <
\min\big\{a,\min\limits_{1\leq i \leq n}\{\alpha^{-}_{i},c_{i}^{-}\}\big\}$ such
that
\begin{eqnarray}\label{lam}
H_{i}(\lambda)>0,\,\,\,\overline{H_{i}}(\lambda)>0,\,\,\,
H^{\ast}_{i}(\lambda)>0,\,\,\,\overline{H^{\ast}_{i}}(\lambda)>0, \quad i=1,2,\ldots,n,
\end{eqnarray}
which imply that for $i=1,2,\ldots,n,$
\begin{eqnarray*}
&&\frac{1}{\alpha_{i}^{-}-\lambda}\bigg(\exp\big(\lambda\sup\limits_{s\in\mathbb{T}}\nu(s)\big)\bigg(
\alpha_{i}^{+}\eta_{i}^{+}+\sum\limits_{j=1}^{n}D_{ij}^{+}L_{j}+\sum\limits_{j=1}^{n}(D_{ij}^\tau)^{+} L_{j}+\sum\limits_{j=1}^n\bar{D}_{ij}^{+}L_{j}\sigma^{+}_{ij} \\ &&+\sum\limits_{j=1}^n\tilde{D}_{ij}^{+}L_{j}\zeta^{+}_{ij}\bigg)
+B_{i}^{+}\bigg)<1,
\end{eqnarray*}
\begin{eqnarray*}
\frac{1}{c_{i}^{-}-\lambda}
\big(\exp\big(\lambda\sup\limits_{s\in\mathbb{T}}\nu(s)\big)c_{i}^{+}\varsigma_{i}^{+}+E_{i}^{+}L_{i}\big)<1,
\end{eqnarray*}
\begin{eqnarray*}
&&\bigg(1+\frac{\alpha_{i}^{+}\exp(\lambda\sup\limits_{s\in
\mathbb{T}}\nu(s))}{\alpha_{i}^{-}-\lambda}\bigg)
\bigg(
\alpha_{i}^{+}\eta_{i}^{+}+\sum\limits_{j=1}^{n}D_{ij}^{+}L_{j}+\sum\limits_{j=1}^{n}(D_{ij}^\tau)^{+} L_{j}+\sum\limits_{j=1}^n\bar{D}_{ij}^{+}L_{j}\sigma^{+}_{ij} \\ &&+\sum\limits_{j=1}^n\tilde{D}_{ij}^{+}L_{j}\zeta^{+}_{ij}
+B_{i}^{+}\bigg)<1
\end{eqnarray*}
and
\begin{eqnarray*}\bigg(1+\frac{c_{i}^{+}\exp(\lambda\sup\limits_{s\in
\mathbb{T}}\nu(s))}{c_{i}^{-}-\lambda}\bigg)\big(c_{i}^{+}\varsigma_{i}^{+}+E_{i}^{+}L_{i}\big)<1.
\end{eqnarray*}

Let
\begin{eqnarray*}
M&=&\max\limits_{1\leq i \leq n}\bigg\{\frac{\alpha_{i}^{-}}{\alpha_{i}^{+}\eta_{i}^{+}+\sum\limits_{j=1}^{n}D_{ij}^{+}L_{j}+\sum\limits_{j=1}^{n}(D_{ij}^\tau)^{+} L_{j}+\sum\limits_{j=1}^n\bar{D}_{ij}^{+}L_{j}\sigma^{+}_{ij} +\sum\limits_{j=1}^n\tilde{D}_{ij}^{+}L_{j}\zeta^{+}_{ij}
+B_{i}^{+}},\\
&&\frac{c_{i}^{-}}{c_{i}^{+}\varsigma_{i}^{+}+E_{i}^{+}L_{i}}\bigg\},
\end{eqnarray*}
then by $(H_{3})$ we have $M>1$.

It is obvious that
\begin{eqnarray*}
\|Z(t)-Z^{\ast}(t)\|\leq M\hat{e}_{\ominus_{\nu}\lambda}(t,t_{0})\|\varphi\|_{0}, \qquad \forall
t\in[t_0,0]_{\mathbb{T}},\,\,\,i=1,2,\ldots,n,
\end{eqnarray*}
where $\ominus_{\nu}\lambda\in\mathcal{R}^{+}_\nu$ and $\lambda$ satisfies \eqref{lam}. We claim that
\begin{eqnarray}\label{e45}
\|Z(t)-Z^{\ast}(t)\|
\leq M\hat{e}_{\ominus_{\nu}\lambda}(t,t_{0})\|\psi\|_{0}, \qquad \forall
t\in[t_0,+\infty)_{\mathbb{T}}.
\end{eqnarray}
To prove \eqref{e45}, we show that for any $P>1$, the following inequality holds:
\begin{eqnarray}\label{e46}
\|Z(t)-Z^{\ast}(t)\|< PM\hat{e}_{\ominus_{\nu}\lambda}(t,t_{0})\|\psi\|_{0}, \quad \forall
t\in[t_{0},+\infty)_{\mathbb{T}}.
\end{eqnarray}
If \eqref{e46} is not true, then there must be some $t_{1}\in(0,+\infty)_{\mathbb{T}}$, $c\geq 1$ such that
\begin{eqnarray}\label{e48}
\|Z(t_{1})-Z^{\ast}(t_{1})\|=c PM \hat{e}_{\ominus_{\nu}\lambda}(t_{1},t_{0})\|\psi\|_{0}
\end{eqnarray}
and
\begin{eqnarray}\label{e49}
\|Z(t)-Z^{\ast}(t)\|\leq cPM \hat{e}_{\ominus_{\nu}\lambda}(t,t_{0})\|\psi\|_0,\quad t\in[t_0,t_{1}]_{\mathbb{T}}.
\end{eqnarray}
By \eqref{c2}, \eqref{e48}, \eqref{e49} and $(H_{1})$-$(H_{3})$, we obtain for $i=1,2,\ldots,n$,
\begin{eqnarray}\label{b1}
&&|u_{i}(t_{1})|\nonumber\\
&\leq&\hat{e}_{-\alpha_{i}}(t_{1},t_{0})\|\psi\|_0
+cPM\hat{e}_{\ominus_{\nu}\lambda}(t_{1},t_{0})\|\psi\|_0\int_{t_{0}}^{t_{1}}\hat{e}_{-\alpha_{i}}(t_{1},\rho(s))
\hat{e}_{\lambda}(t_{1},\rho(s))\nonumber\\
&&\times\bigg(\alpha^{+}_{i}\int_{s-\eta_{i}(s)}^{s}\hat{e}_{\lambda}(\rho(u),u) \nabla u+
\sum\limits_{j=1}^{n}D_{ij}^{+}L_{j}\hat{e}_{\lambda}(\rho(s),s)\nonumber\\
&&+\sum\limits_{j=1}^{n}(D_{ij}^\tau)^{+} L_{j}\hat{e}_{\lambda}(\rho(s),s-\tau_{ij}(s))
+\sum\limits_{j=1}^n\bar{D}_{ij}^{+}L_{j}\int_{s-\sigma_{ij}(s)}^{s}\hat{e}_{\lambda}(\rho(u),u)\nabla u \nonumber\\
&&+\sum\limits_{j=1}^n\tilde{D}_{ij}^{+}L_{j}\int_{s-\zeta_{ij}(s)}^{s}\hat{e}_{\lambda}(\rho(u),u)\nabla u
+B_i^{+}\bigg)\nabla s\nonumber\\
&\leq&\hat{e}_{-\alpha_{i}}(t_{1},t_{0})\|\psi\|_0
+cPM\hat{e}_{\ominus_{\nu}\lambda}(t_{1},t_{0})\|\psi\|_0\int_{t_{0}}^{t_{1}}
\hat{e}_{-\alpha_{i}\oplus_{\nu}\lambda}(t_{1},\rho(s))\nonumber\\
&&\times\bigg(\alpha^{+}_{i}\eta^{+}_{i}\hat{e}_{\lambda}(\rho(s),s-\eta_{i}(s))+
\sum\limits_{j=1}^{n}D_{ij}^{+}L_{j}\hat{e}_{\lambda}(\rho(s),s)\nonumber\\
&&+\sum\limits_{j=1}^{n}(D_{ij}^\tau)^{+} L_{j}\hat{e}_{\lambda}(\rho(s),s-\tau_{ij}(s))
+\sum\limits_{j=1}^n\bar{D}_{ij}^{+}L_{j}\sigma_{ij}^+\hat{e}_{\lambda}(\rho(s),s-\sigma_{ij}(s))\nonumber \\
 &&+\sum\limits_{j=1}^n\tilde{D}_{ij}^{+}L_{j}\zeta_{ij}^+\hat{e}_{\lambda}(\rho(s),s-\zeta_{ij}(s))
+B_i^{+}\bigg)\nabla s\nonumber\\
  &\leq&\hat{e}_{-\alpha_{i}}(t_{1},t_{0})\|\psi\|_0
+cPM\hat{e}_{\ominus_{\nu}\lambda}(t_{1},t_{0})\|\psi\|_0\int_{t_{0}}^{t_{1}}
\hat{e}_{-\alpha_{i}\oplus_{\nu}\lambda}(t_{1},\rho(s))\nonumber\\
&&\times\bigg(\alpha_{i}^{+}\eta_{i}^{+}\exp\big[\lambda(\eta^{+}_{i}+\sup\limits_{s\in\mathbb{T}}\nu(s))\big]+
\sum\limits_{j=1}^{n}D_{ij}^{+}L_{j}\exp\big(\lambda\sup\limits_{s\in\mathbb{T}}\nu(s)\big)\nonumber\\
&&+\sum\limits_{j=1}^{n}(D_{ij}^\tau)^{+} L_{j}\exp\big[\lambda(\tau^{+}_{ij}+\sup\limits_{s\in\mathbb{T}}\nu(s))\big]
+\sum\limits_{j=1}^n\bar{D}_{ij}^{+}L_{j}\sigma^{+}_{ij}\exp\big[\lambda(\sigma^{+}_{ij}+\sup\limits_{s\in\mathbb{T}}\nu(s))\big]\nonumber \\ &&+\sum\limits_{j=1}^n\tilde{D}_{ij}^{+}L_{j}\zeta^{+}_{ij}\exp\big[\lambda(\zeta^{+}_{ij}+\sup\limits_{s\in\mathbb{T}}\nu(s))\big]
+B_i^{+}\bigg)\nabla s\nonumber\\
&<&cPM\hat{e}_{\ominus_{\nu}\lambda}(t_{1},t_{0})\|\psi\|_0
\bigg\{\frac{1}{M}\hat{e}_{-\alpha_{i}\oplus_{\nu}\lambda}(t_{1},t_{0})+\bigg[\exp(\lambda\sup\limits_{s\in\mathbb{T}}\nu(s))\nonumber\\
&&\times\bigg(\alpha_{i}^{+}\eta_{i}^{+}\exp(\lambda\eta^{+}_{i})+
\sum\limits_{j=1}^{n}D_{ij}^{+}L_{j}+\sum\limits_{j=1}^{n}(D_{ij}^\tau)^{+} L_{j}\exp(\lambda\tau^{+}_{ij})\nonumber\\
&&+\sum\limits_{j=1}^n\bar{D}_{ij}^{+}L_{j}\sigma^{+}_{ij}\exp(\lambda\sigma^{+}_{ij}) +\sum\limits_{j=1}^n\tilde{D}_{ij}^{+}L_{j}\zeta^{+}_{ij}\exp(\lambda\zeta^{+}_{ij})\bigg)\nonumber\\
&&+B_i^{+}\bigg]\frac{1-\hat{e}_{-\alpha_{i}\oplus_{\nu}\lambda}(t_{1},t_{0})}{\alpha_{i}^{-}-\lambda}
\bigg\}\nonumber\\
&\leq&cPM \hat{e}_{\ominus_{\nu}\lambda}(t_{1},t_{0})\|\psi\|_0\bigg\{
\bigg[\frac{1}{M}-\frac{1}{\alpha_{i}^{-}-\lambda}\bigg(\exp\big(\lambda\sup\limits_{s\in\mathbb{T}}\nu(s)\big)\bigg(
\alpha_{i}^{+}\eta_{i}^{+}\exp(\lambda\eta^{+}_{i})\nonumber\\
&&+\sum\limits_{j=1}^{n}D_{ij}^{+}L_{j}+\sum\limits_{j=1}^{n}(D_{ij}^\tau)^{+} L_{j}\exp(\lambda\tau^{+}_{ij})+\sum\limits_{j=1}^n\bar{D}_{ij}^{+}L_{j}\sigma^{+}_{ij}\exp(\lambda\sigma^{+}_{ij}) \nonumber\\ &&+\sum\limits_{j=1}^n\tilde{D}_{ij}^{+}L_{j}\zeta^{+}_{ij}\exp(\lambda\zeta^{+}_{ij})
\bigg)+B_{i}^{+}\bigg)\bigg]\hat{e}_{-\alpha_{i}\oplus_{\nu}\lambda}(t_{1},t_{0})\nonumber\\
&&+\frac{1}{\alpha_{i}^{-}-\lambda}
\bigg(\exp\big(\lambda\sup\limits_{s\in\mathbb{T}}\nu(s)\big)\bigg(\alpha_{i}^{+}\eta_{i}^{+}\exp(\lambda\eta^{+}_{i})
+\sum\limits_{j=1}^{n}D_{ij}^{+}L_{j}\nonumber\\
&&+\sum\limits_{j=1}^{n}(D_{ij}^\tau)^{+} L_{j}\exp(\lambda\tau^{+}_{ij})
+\sum\limits_{j=1}^n\bar{D}_{ij}^{+}L_{j}\sigma^{+}_{ij}\exp(\lambda\sigma^{+}_{ij}) \nonumber\\
&&+\sum\limits_{j=1}^n\tilde{D}_{ij}^{+}L_{j}\zeta^{+}_{ij}\exp(\lambda\zeta^{+}_{ij})\bigg)+B_{i}^{+}\bigg)
\bigg\}\nonumber\\
&\leq&cPM \hat{e}_{\ominus_{\nu}\lambda}(t_{1},t_{0})\|\psi\|_0
\end{eqnarray}
and
\begin{eqnarray}\label{b2}
&&|v_{i}(t_{1})|\nonumber\\
&\leq&
\hat{e}_{-c_{i}}(t_{1},t_{0})\|\psi\|_0
+cPM\hat{e}_{\ominus_{\nu}\lambda}(t_{1},t_{0})\|\psi\|_0\int_{t_{0}}^{t_{1}}\hat{e}_{-c_{i}}(t_{1},\rho(s))
\hat{e}_{\lambda}(t_{1},\rho(s))\nonumber\\
&&\times\bigg(c^{+}_{i}\int_{s-\varsigma_{i}(s)}^{s}\hat{e}_{\lambda}(\rho(u),u) \nabla u+E_{i}^{+}L_{i}\bigg)\nabla s\nonumber\\
&\leq&\hat{e}_{-c_{i}}(t_{1},t_{0})\|\psi\|_0
+cPM\hat{e}_{\ominus_{\nu}\lambda}(t_{1},t_{0})\|\psi\|_0\int_{t_{0}}^{t_{1}}
\hat{e}_{-c_{i}\oplus_{\nu}\lambda}(t_{1},\rho(s))\nonumber\\
&&\times\bigg(c^{+}_{i}\varsigma^{+}_{i}\hat{e}_{\lambda}(\rho(s),s-\varsigma_{i}(s))+E_{i}^{+}L_{i}\bigg)\nabla s\nonumber\\
  &\leq&\hat{e}_{-c_{i}}(t_{1},t_{0})\|\psi\|_0
+cPM\hat{e}_{\ominus_{\nu}\lambda}(t_{1},t_{0})\|\psi\|_0\int_{t_{0}}^{t_{1}}
\hat{e}_{-c_{i}\oplus_{\nu}\lambda}(t_{1},\rho(s))\nonumber\\
&&\times\bigg(c_{i}^{+}\varsigma_{i}^{+}\exp\big[\lambda(\varsigma^{+}_{i}+\sup\limits_{s\in\mathbb{T}}\nu(s))\big]
+E_{i}^{+}L_{i}\bigg)\nabla s\nonumber\\
&<&
cPM\hat{e}_{\ominus_{\nu}\lambda}(t_{1},t_{0})\|\psi\|_0
\bigg\{\frac{1}{M}\hat{e}_{-c_{i}\oplus_{\nu}\lambda}(t_{1},t_{0})\nonumber\\
&&+\big[\exp(\lambda\sup\limits_{s\in\mathbb{T}}\nu(s))c_{i}^{+}\varsigma_{i}^{+}\exp(\lambda\varsigma^{+}_{i})+E_{i}^{+}L_{i}\big]
\frac{1-\hat{e}_{-c_{i}\oplus_{\nu}\lambda}(t_{1},t_{0})}{c_{i}^{-}-\lambda}\bigg\}\nonumber\\
&\leq&cPM \hat{e}_{\ominus_{\nu}\lambda}(t_{1},t_{0})\|\psi\|_0\bigg\{
\bigg[\frac{1}{M}-\frac{1}{c_{i}^{-}-\lambda}
\big(\exp\big(\lambda\sup\limits_{s\in\mathbb{T}}\nu(s)\big)c_{i}^{+}\varsigma_{i}^{+}\exp(\lambda\varsigma^{+}_{i})\nonumber\\
&&+E_{i}^{+}L_{i}\big)\bigg]\hat{e}_{-c_{i}\oplus_{\nu}\lambda}(t_{1},t_{0})
+\frac{1}{c_{i}^{-}-\lambda}
\big(\exp\big(\lambda\sup\limits_{s\in\mathbb{T}}\nu(s)\big)c_{i}^{+}\varsigma_{i}^{+}\exp(\lambda\varsigma^{+}_{i})+E_{i}^{+}L_{i}\big)\bigg\}\nonumber\\
&\leq&cPM \hat{e}_{\ominus_{\nu}\lambda}(t_{1},t_{0})\|\psi\|_0.
\end{eqnarray}

Similarly, in view of \eqref{c2}, we have for $i=1,2,\ldots,n$,
\begin{eqnarray}\label{b3}
&&|u_{i}^{\nabla}(t_{1})|\nonumber\\
&\leq&
\alpha_{i}^{+} \hat{e}_{-\alpha_{i}}(t_{1},t_{0})\|\psi\|_0+cP
M\hat{e}_{\ominus_{\nu}\lambda}(t_{1},t_{0})\|\psi\|_0
\bigg(\alpha_{i}^{+}\int_{t_{1}-\eta_{i}(t_{1})}^{t_{1}}\hat{e}_{\lambda}(t_{1},u)
\nabla u\nonumber\\
&&+\sum_{j=1}^{n}D_{ij}^{+}L_{j}\hat{e}_{\lambda}(t_1,t_1)
+\sum_{j=1}^{n}(D_{ij}^{\tau})^{+}L_{j}\hat{e}_{\lambda}(t_1,t_1-\tau_{ij}(t_1))\nonumber\\
&&+\sum_{j=1}^{n}\bar{D}_{ij}^{+}L_{j}\int_{t_1-\sigma_{ij}(t_1)}^{t_1}\hat{e}_{\lambda}(\rho(u),u)\nabla u\nonumber\\
&&+\sum_{j=1}^{n}\tilde{D}_{ij}^{+}L_{j}\int_{t_1-\zeta_{ij}(t_1)}^{t_1}\hat{e}_{\lambda}(\rho(u),u)\nabla u+B_{i}^{+}
\bigg)\nonumber\\
&&+ \alpha_{i}^{+}cP M\hat{e}_{\ominus_{\nu}\lambda}(t_{1},t_{0})\|\psi\|_0\int_{t_{0}}^{t_{1}}\hat{e}_{-\alpha_{i}}(t_{1},
\rho(s))\hat{e}_{\lambda}(t_{1},\rho(s))\nonumber\\
&&\times\bigg\{
\alpha_{i}^{+}\int_{s-\eta_{i}(s)}^{s}\hat{e}_{\lambda}(\rho(u),u) \nabla u
+\sum_{j=1}^{n}D_{ij}^{+}L_{j}\hat{e}_{\lambda}(\rho(s),s)\nonumber\\
&&+\sum_{j=1}^{n}(D_{ij}^{\tau})^{+}L_{j}\hat{e}_{\lambda}(\rho(s),s-\tau_{ij}(t))
+\sum_{j=1}^{n}\bar{D}_{ij}^{+}L_{j}\int_{s-\sigma_{ij}(s)}^{s}\hat{e}_{\lambda}(\rho(u),u)\nabla u\nonumber\\
&&+\sum_{j=1}^{n}\tilde{D}_{ij}^{+}L_{j}\int_{s-\zeta_{ij}(s)}^{s}\hat{e}_{\lambda}(\rho(u),u)\nabla u+B_{i}^{+}\bigg\}
\nabla s\nonumber\\
&\leq&\alpha_{i}^{+} e_{-\alpha_{i}}(t_{1},t_{0})\|\psi\|_0+cP
M\hat{e}_{\ominus_{\nu}\lambda}(t_{1},t_{0})\|\psi\|_0
\bigg(\alpha_{i}^{+}\eta_{i}^{+}\exp(\lambda\eta_{i}^{+})
+\sum_{j=1}^{n}D_{ij}^{+}L_{j}\nonumber\\
&&+\sum_{j=1}^{n}(D_{ij}^{\tau})^{+}L_{j}\exp(\lambda\tau_{ij}^{+})
+\sum_{j=1}^{n}\bar{D}_{ij}^{+}L_{j}\sigma_{ij}^{+}\exp(\lambda\sigma_{ij}^{+})\nonumber\\
&&+\sum_{j=1}^{n}\tilde{D}_{ij}^{+}L_{j}\zeta_{ij}^{+}\exp(\lambda\zeta_{ij}^{+})+B_{i}^{+}
\bigg)\nonumber\\
&&\times\bigg(1+\alpha_{i}^{+}\exp(\lambda\sup\limits_{s\in\mathbb{T}}\nu(s))
\int_{t_{0}}^{t_{1}}\hat{e}_{-\alpha_{i}\oplus\lambda}(t_{1},\rho(s))
\nabla s\bigg)\nonumber\\
&\leq& cP M\hat{e}_{\ominus_{\nu}\lambda}(t_{1},t_{0})\|\psi\|_0
\bigg\{\bigg[\frac{1}{M}-\frac{\exp\big(\lambda\sup\limits_{s\in\mathbb{T}}\nu(s)\big)}{\alpha_{i}^{-}-\lambda}\bigg(
\alpha_{i}^{+}\eta_{i}^{+}\exp(\lambda\eta^{+}_{i})\nonumber\\
&&+\sum\limits_{j=1}^{n}D_{ij}^{+}L_{j}+\sum\limits_{j=1}^{n}(D_{ij}^\tau)^{+} L_{j}\exp(\lambda\tau^{+}_{ij})+\sum\limits_{j=1}^n\bar{D}_{ij}^{+}L_{j}\sigma^{+}_{ij}\exp(\lambda\sigma^{+}_{ij}) \nonumber\\ &&+\sum\limits_{j=1}^n\tilde{D}_{ij}^{+}L_{j}\zeta^{+}_{ij}\exp(\lambda\zeta^{+}_{ij})
+B_{i}^{+}\bigg)\bigg]\alpha_{i}^{+}\hat{e}_{-\alpha_{i}\oplus_{\nu}\lambda}(t_{1},t_{0})\nonumber\\
&&+\bigg(1+\frac{\alpha_{i}^{+}\exp\big(\lambda\sup\limits_{s\in\mathbb{T}}\nu(s)\big)}{\alpha_{i}^{-}-\lambda}\bigg)
\bigg(\alpha_{i}^{+}\eta_{i}^{+}\exp(\lambda\eta^{+}_{i})
+\sum\limits_{j=1}^{n}D_{ij}^{+}L_{j}\nonumber\\
&&+\sum\limits_{j=1}^{n}(D_{ij}^\tau)^{+} L_{j}\exp(\lambda\tau^{+}_{ij})
+\sum\limits_{j=1}^n\bar{D}_{ij}^{+}L_{j}\sigma^{+}_{ij}\exp(\lambda\sigma^{+}_{ij})\nonumber\\
&&+\sum\limits_{j=1}^n\tilde{D}_{ij}^{+}L_{j}\zeta^{+}_{ij}\exp(\lambda\zeta^{+}_{ij})+B_{i}^{+}\bigg)
\bigg\}\nonumber\\
&<&cP M\hat{e}_{\ominus_{\nu}\lambda}(t_{1},t_{0})\|\psi\|_0
\end{eqnarray}
and
\begin{eqnarray}\label{b4}
&&|v_{i}^{\nabla}(t_{1})|\nonumber\\
&\leq&
c_{i}^{+} \hat{e}_{-c_{i}}(t_{1},t_{0})\|\psi\|_0+cP
M\hat{e}_{\ominus_{\nu}\lambda}(t_{1},t_{0})\|\psi\|_0
\bigg(c_{i}^{+}\int_{t_{1}-\varsigma_{i}(t_{1})}^{t_{1}}\hat{e}_{\lambda}(t_{1},u)
\nabla u\nonumber\\
&&+E_{i}^{+}L_{i}
\bigg)
+c_{i}^{+}cP M\hat{e}_{\ominus_{\nu}\lambda}(t_{1},t_{0})\|\psi\|_0\int_{t_{0}}^{t_{1}}\hat{e}_{-c_{i}}(t_{1},
\rho(s))\hat{e}_{\lambda}(t_{1},\rho(s))\nonumber\\
&&\times\bigg\{
c_{i}^{+}\int_{s-\varsigma_{i}(s)}^{s}\hat{e}_{\lambda}(\rho(u),u) \nabla u
+E_{i}^{+}L_{i}\bigg\}
\nabla s\nonumber\\
&\leq&c_{i}^{+} e_{-c_{i}}(t_{1},t_{0})\|\psi\|_0+cP
M\hat{e}_{\ominus_{\nu}\lambda}(t_{1},t_{0})\|\psi\|_0
\bigg(c_{i}^{+}\varsigma_{i}^{+}\exp(\lambda\varsigma_{i}^{+})
+E_{i}^{+}L_{i}
\bigg)\nonumber\\
&&\times\bigg(1+c_{i}^{+}\exp(\lambda\sup\limits_{s\in\mathbb{T}}\nu(s))
\int_{t_{0}}^{t_{1}}\hat{e}_{-c_{i}\oplus\lambda}(t_{1},\rho(s))
\nabla s\bigg)\nonumber\\
&\leq&
cPM \hat{e}_{\ominus_{\nu}\lambda}(t_{1},t_{0})\|\psi\|_0\bigg\{
\bigg[\frac{1}{M}-\frac{\exp\big(\lambda\sup\limits_{s\in\mathbb{T}}\nu(s)\big)}{c_{i}^{-}-\lambda}
\big(c_{i}^{+}\varsigma_{i}^{+}\exp(\lambda\varsigma^{+}_{i})\nonumber\\
&&+E_{i}^{+}L_{i}\big)\bigg]c_{i}^{+}\hat{e}_{-c_{i}\oplus_{\nu}\lambda}(t_{1},t_{0})
+\bigg(1+\frac{c_{i}^{+}\exp\big(\lambda\sup\limits_{s\in\mathbb{T}}\nu(s)\big)}{c_{i}^{-}-\lambda}\bigg)
\big(c_{i}^{+}\varsigma_{i}^{+}\exp(\lambda\varsigma^{+}_{i})+E_{i}^{+}L_{i}\big)\bigg\}\nonumber\\
&<&cP M\hat{e}_{\ominus_{\nu}\lambda}(t_{1},t_{0})\|\psi\|_0.
\end{eqnarray}
In view of \eqref{b1}-\eqref{b4}, we have
\begin{eqnarray*}
\|Z(t_1)-Z^{\ast}(t_1)\|<cPM \hat{e}_{\ominus_{\nu}\lambda}(t_{1},t_{0})\|\psi\|_{0},
\end{eqnarray*}
which contradicts \eqref{e48}, and so \eqref{e46} holds. Letting $p\rightarrow
1$, then \eqref{e45} holds. Hence, the almost periodic solution of system
\eqref{a1} is globally exponentially stable. The proof is complete.
\end{proof}

\begin{remark}
Since  conditions $(A_1)$-$(A_3)$ are independent of the backwards graininess function  of the time scale, according to Theorems \ref{thm31} and \ref{thm41},  both the continuous time and the discrete time cases of system \eqref{a1} share the same dynamical behavior. 
\end{remark}

\section{ An example}
 \setcounter{equation}{0}
 \indent

In this section, we present an example to illustrate the feasibility
of our results obtained in previous sections.

\begin{example} In system \eqref{a1}, suppose $\sup\limits_{t\in\mathbb{T}}\nu(t)<+\infty$,
let $i,j=2$ and take coefficients as follows:
\[
\left[
                                                                \begin{array}{c}
                                                                  f_1(x) \\
                                                                  f_2(x) \\
                                                                \end{array}
                                                              \right]=\left[
                                                                        \begin{array}{c}
                                                                          \sin \frac{x}{2} \\
                                                                          \sin \frac{x}{2} \\
                                                                        \end{array}
                                                                      \right],
\alpha_1(t)=0.895+0.005\sin \sqrt 7 t,\,\alpha_{2}(t)=0.79+0.01\cos \sqrt 11 t,\]
\[\left[
    \begin{array}{cc}
      D_{11}(t) & D_{12}(t) \\
      D_{21}(t) & D_{22}(t) \\
    \end{array}
  \right]=
  \left[
    \begin{array}{cc}
      D_{11}^{\tau}(t) & D_{12}^{\tau}(t) \\
      D_{21}^{\tau}(t) & D_{22}^{\tau}(t) \\
    \end{array}
  \right]=\left[
    \begin{array}{cc}
      \bar{D}_{11}(t) & \bar{D}_{12}(t) \\
      \bar{D}_{21}(t) & \bar{D}_{22}(t) \\
    \end{array}
  \right]\]
    \[=\left[
    \begin{array}{cc}
      \tilde{D}_{11}(t) & \tilde{D}_{12}(t) \\
      \tilde{D}_{21}(t) & \tilde{D}_{22}(t) \\
    \end{array}
      \right]      =\left[
        \begin{array}{cc}
          \frac{\sin t}{20} & \frac{\sin t}{20} \\
          \frac{\cos t}{20} & \frac{\cos t}{20} \\
        \end{array}
      \right]
      ,\]
\[B_{1}(t)=\frac{\sin \sqrt 2t}{\pi e^{2\pi}},B_{2}(t)=\frac{\cos t}{\pi e^{2\pi}},
c_1(t)=0.285+0.005\sin \sqrt 5t,\,c_{2}(t)=0.275+0.005\cos \sqrt 3t,\]
\[
E_1(t)=0.21\sin t,\,E_{2}(t)=0.16\cos \sqrt 3t,\,
I_1(t)=0.08\sin \sqrt 7 t,\,I_{2}(t)=0.1\cos t,\]
\[
J_1(t)=0.01\sin \sqrt 2t,\,J_{2}(t)=0.02\cos \sqrt 3t,\,
\eta_1(t)=e^{-5|\cos( \pi t+\frac{3\pi}{2})|},\,\eta_2(t)=e^{-4|\cos(\pi t+\frac{\pi}{2})|},\]
\[
\sigma_{11}(t)=e^{-4|\sin\pi t|},\,\sigma_{12}(t)=e^{-5|\cos(\pi t+\frac{3\pi}{2})|},\,
\sigma_{21}(t)=e^{-6|\cos(\pi t-\frac{3\pi}{2})|},\,\sigma_{22}(t)=e^{-4|\sin 3\pi t|},\]
\[
\zeta_{11}(t)=e^{-7|\sin 2\pi t|},\,\zeta_{12}(t)=e^{-5|\sin 5\pi t|},\,
\zeta_{21}(t)=e^{-4|\cos(\pi t+\frac{5\pi}{2})|},\,\zeta_{22}(t)=e^{-5|\cos(\pi t+\frac{\pi}{2})|},\]
\[
\varsigma_1(t)=e^{-4|\cos(\pi t+\frac{3\pi}{2})|},\,\varsigma_2(t)=e^{-7|\sin 3\pi t|}.\]
\end{example}

By calculating, we have
\[
\alpha_{1}^+=0.9, \alpha_{1}^-=0.89, \alpha_{2}^+=0.8, \alpha_{2}^-=0.78, c_{1}^+=0.29,
c_{1}^-=0.28,c_{2}^+=0.28,
c_{2}^-=0.27,
\]
\[\left[
    \begin{array}{cc}
      D_{11}^+& D_{12}^+ \\
      D_{21}^+ & D_{22}^+ \\
    \end{array}
  \right]=
  \left[
    \begin{array}{cc}
      D_{11}^{\tau+} & D_{12}^{\tau+} \\
      D_{21}^{\tau+} & D_{22}^{\tau+} \\
    \end{array}
  \right]=\left[
    \begin{array}{cc}
      \bar{D}_{11}^+ & \bar{D}_{12}^+ \\
      \bar{D}_{21}^+ & \bar{D}_{22}^+ \\
    \end{array}
  \right]=\left[
    \begin{array}{cc}
      \tilde{D}_{11}^+ & \tilde{D}_{12}^+ \\
      \tilde{D}_{21}^+ & \tilde{D}_{22}^+ \\
    \end{array}
      \right]      =\left[
        \begin{array}{cc}
          \frac{1}{20} & \frac{1}{20} \\
          \frac{1}{20} & \frac{1}{20} \\
        \end{array}
      \right]
      ,\]
\[I_{1}^+=0.08,\,I_{2}^+=0.1,\,J_{1}^+=0.01,\,J_{2}^+=0.02,\,B_{1}^+=B_{2}^+=\frac{1}{\pi e^{2\pi}},\,E_{1}^+=0.21,\,E_{2}^+=0.16,
\]
\[
\eta_{1}^+=0.06, \eta_{2}^+=0.05, \sigma_{11}^+=0.08,\sigma_{12}^+=0.07,\sigma_{21}^+=0.04,\sigma_{22}^+=0.02,\]\[
\zeta_{11}^+=0.06,\zeta_{12}^+=0.05,\zeta_{21}^+=0.02,\zeta_{22}^+=0.03,\varsigma_{1}^+=0.4,\varsigma_{2}^+=0.5.
\]
Take $L_1=L_2=1$ and $r=0.45,$ we have
\begin{eqnarray*}P_1&=&\alpha_{1}^{+}\eta_{1}^{+}r+\sum\limits_{j=1}^{2}D_{1j}^{+}(L_{j}r+|f_j(0)|)
+\sum\limits_{j=1}^{2}(D_{1j}^\tau)^{+}(L_{j}r+|f_j(0)|)\\
&&+\sum\limits_{j=1}^2\bar{D}_{1j}^{+}\sigma_{1j}^{+}(L_{j}r +|f_j(0)|)+\sum\limits_{j=1}^2\tilde{D}_{1j}^{+}\zeta_{1j}^{+}(L_{j}r+|f_j(0)|)
+B_1^{+}r+I_1^+\\&=&
0.9\times0.06\times0.45+1/20\times(1\times0.45+0)+1/20\times(1\times0.45+0)\\& &+1/20\times(1\times0.45+0)+1/20\times(1\times0.45+0)+1/20\times(1\times0.45+0)\\& &+1/20\times(1\times0.45+0)+1/(\pi e^{2\pi})\times 0.45+0.08\\
&=&0.2004,\\
P_2&=&\alpha_{2}^{+}\eta_{2}^{+}r+\sum\limits_{j=1}^{2}D_{2j}^{+}(L_{j}r+|f_j(0)|)
+\sum\limits_{j=1}^{2}(D_{2j}^\tau)^{+}(L_{j}r+|f_j(0)|)\\
&&+\sum\limits_{j=1}^2\bar{D}_{2j}^{+}\sigma_{2j}^{+}(L_{j}r +|f_j(0)|)+\sum\limits_{j=1}^2\tilde{D}_{2j}^{+}\zeta_{2j}^{+}(L_{j}r+|f_j(0)|)
+B_2^{+}r+I_2^+\\&=&
0.8\times0.05\times0.45+1/20\times(1\times0.45+0)+1/20\times(1\times0.45+0)\\& &+1/20\times(1\times0.45+0)+1/20\times(1\times0.45+0)+1/20\times(1\times0.45+0)\\& &+1/20\times(1\times0.45+0)+1/(\pi e^{2\pi})\times 0.45+0.1\\
&=&0.2107,
\\
Q_1& =&c_{1}^{+}\varsigma_{1}^{+}r
  +E_1^{+}(L_{1}r+|f_1(0)|)+J_1^+=0.29\times0.04\times0.45+0.21\times(0.45+0)\\& &+0.01= 0.1097,\\
  Q_2& =&c_{2}^{+}\varsigma_{2}^{+}r
  +E_2^{+}(L_{2}r+|f_2(0)|)+J_2^+=0.28\times0.05\times0.45+0.16\times(0.45+0)\\& &+0.02=0.1208,\\
  \overline{P_1}&=&\alpha_{1}^{+}\eta_{1}^{+}
  +\sum\limits_{j=1}^{2}D_{1j}^{+}L_{j} +\sum\limits_{j=1}^{2}(D_{1j}^\tau)^{+}L_{j} +\sum\limits_{j=1}^2\bar{D}_{1j}^{+}\sigma_{1j}^{+}L_{j}  +\sum\limits_{j=1}^2\tilde{D}_{1j}^{+}\zeta_{1j}^{+}L_{j} +B_1^{+}\\&=&
  0.9\times0.06+1/20+1/20+1/20\times0.08+1/20\times0.07\\& &+1/20\times0.06+1/20\times0.05+1/(\pi e^{2\pi})=0.1676,\\
  \overline{P_2}&=&\alpha_{2}^{+}\eta_{2}^{+}
  +\sum\limits_{j=1}^{2}D_{2j}^{+}L_{j} +\sum\limits_{j=1}^{2}(D_{2j}^\tau)^{+}L_{j} +\sum\limits_{j=1}^2\bar{D}_{2j}^{+}\sigma_{2j}^{+}L_{j}  +\sum\limits_{j=1}^2\tilde{D}_{2j}^{+}\zeta_{2j}^{+}L_{j} +B_2^{+}\\&=&
  0.8\times0.05+1/20+1/20+1/20\times0.04+1/20\times0.02\\& &+1/20\times0.02+1/20\times0.03+1/(\pi e^{2\pi})=0.1471, \\
  \overline{Q_1}&=&c_{1}^{+}\varsigma_{1}^{+}
  +E_1^{+}L_{1}=
  0.29\times0.04+0.21\times1=0.2216,\\
  \overline{Q_2}&=&c_{2}^{+}\varsigma_{2}^{+}
  +E_2^{+}L_{2}=
  0.28\times0.05+0.16\times1=0.224.\\
\end{eqnarray*}
Obviously, conditions $(H_1)$ and $(H_2)$ hold. Since
\begin{eqnarray*}
&&\max\limits_{1\leq i\leq n}\bigg\{\frac{P_{i}}{\alpha_{i}^{-}},\bigg(1+\frac{\alpha_i^+}{\alpha_{i}^{-}}\bigg)P_{i},\frac{Q_{i}}{c_{i}^{-}},\bigg(1+\frac{c_i^+}{c_{i}^{-}}\bigg)Q_{i}
\bigg\}\leq r,
\\
&& \max\limits_{1\leq i\leq 2}\bigg\{\frac{\overline{P_{i}}}{\alpha_{i}^{-}},\bigg(1+\frac{\alpha_i^+}{\alpha_{i}^{-}}\bigg)\overline{P_{i}},\frac{\overline{Q_{i}}}{c_{i}^{-}},\bigg(1+\frac{c_i^+}{c_{i}^{-}}\bigg)\overline{Q_{i}}
\bigg\}< 1,
\end{eqnarray*}
that is,
\begin{eqnarray*}
&&\max\limits_{1\leq i\leq 2}\{0.2252,0.4031,0.2702,0.4269,0.3919,0.4474,0.2234,0.2461\}\\&&=0.4474\leq r=0.45,
\\
&&\max\limits_{1\leq i\leq 2}\{0.1883,0.1886,0.3371,0.2980,0.7914,0.8296,0.4511,0.4563\}\\&&=0.8296\leq 1,
\end{eqnarray*}
so, condition $(H_3)$ holds. Thus,
all the conditions in Theorem \ref{thm41} are satisfied. Therefore, according to Theorem \ref{thm41}, system \eqref{a1} has a unique almost periodic solution which is globally exponentially stable. Especially,  for both the discrete time and continuous time cases,  system \eqref{a1} has a unique almost periodic solution which is globally exponentially stable (see Figures 1-4).
\begin{figure}
  \centering
  \includegraphics[width=13cm,height=9cm]{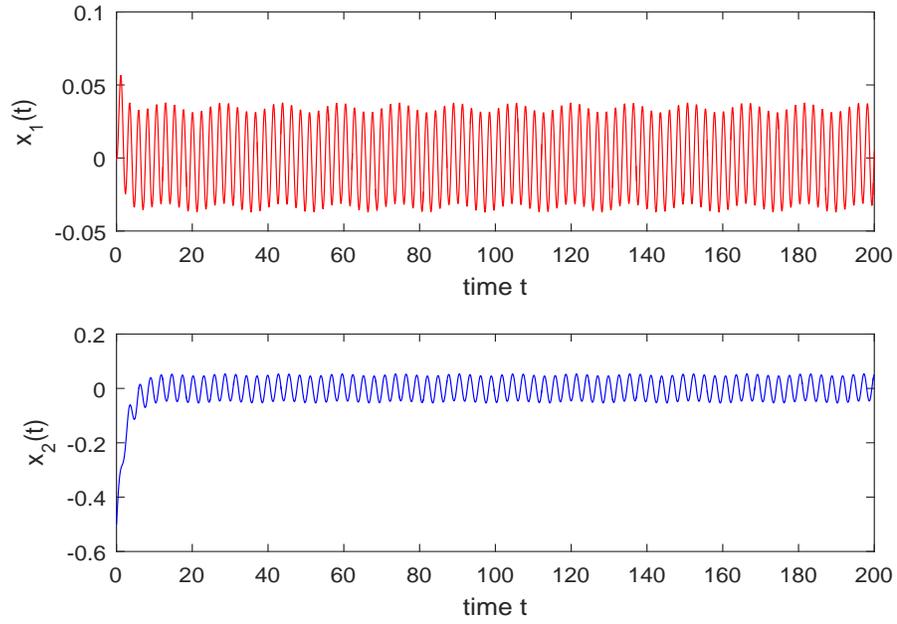}
  \caption{Continuous situation $\mathbb{T}=\mathbb{R}: x_1,x_2$ with $t$.}
\end{figure}
\begin{figure}
\centering
  \includegraphics[width=13cm,height=9cm]{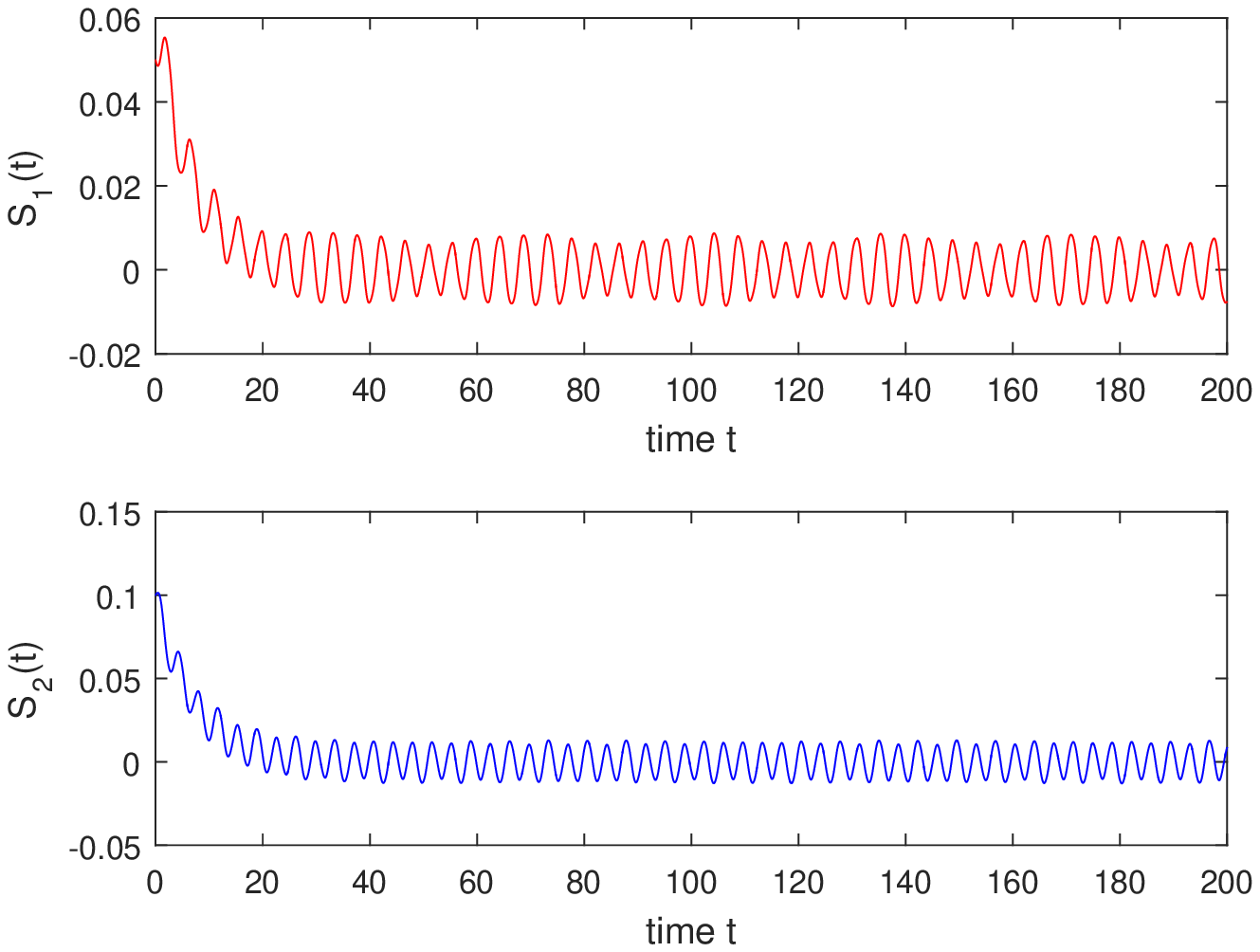}
  \caption{Continuous situation $\mathbb{T}=\mathbb{R}: S_1,S_2$ with $t$.}
\end{figure}
\begin{figure}
\centering
  \includegraphics[width=13cm,height=9cm]{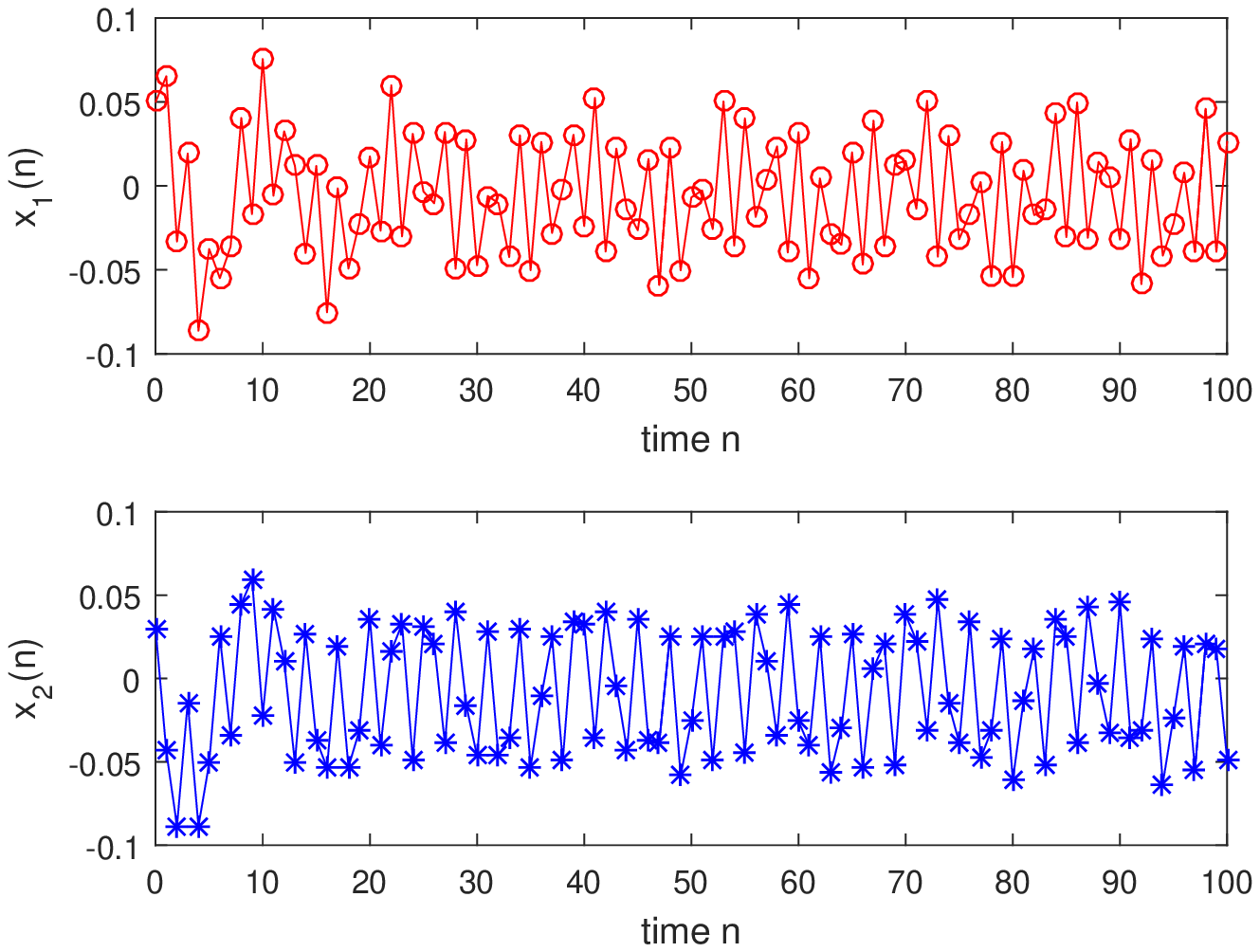}%
  \caption{Discrete situation  $\mathbb{T}=\mathbb{Z}: x_1,x_2$ with $n$.}
\end{figure}

\begin{figure}
  \centering
  \includegraphics[width=13cm,height=9cm]{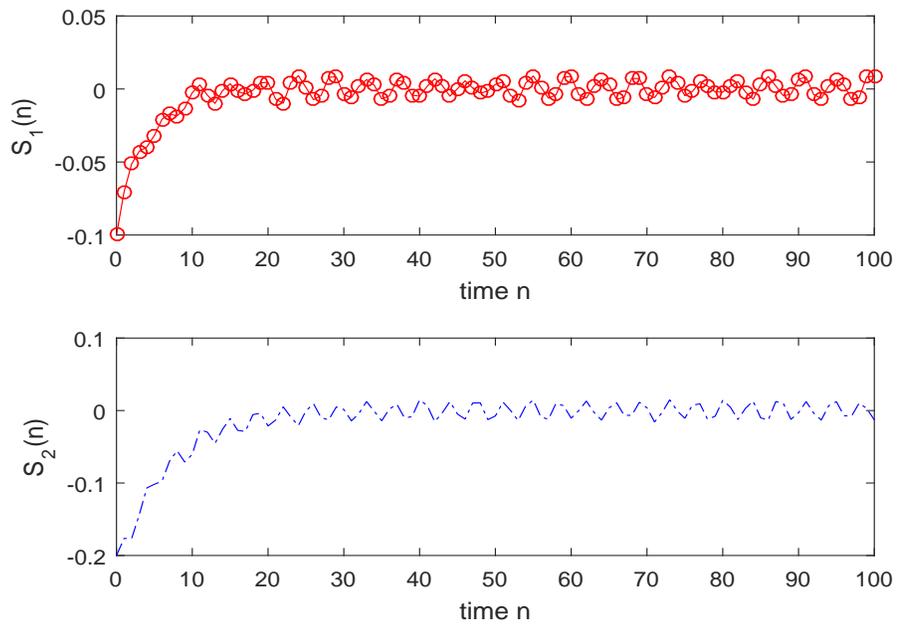}
  \caption{Discrete situation $\mathbb{T}=\mathbb{Z}: S_1,S_2$ with $n$.}
  \end{figure}

  \newpage

\section{ Conclusion}
\indent

In this paper, we propose  a class of neutral type competitive neural networks with mixed time-varying delays and leakage delays on a new type of almost periodic time scales. Based on the exponential dichotomy of linear dynamic equations on time scales, Banach's fixed point theorem and the theory of calculus on time scales, we  obtain  the existence and uniqueness of almost periodic solutions for this class of neural networks without assuming the boundedness of the activation functions and under the same assumptions we also obtain the    global exponential stability of the almost periodic solutions.   Our   approaches of
this paper maybe further be used for other dynamical systems. But, if we modify Definition \ref{def31} to  be the following form:
\begin{definition} \label{def31m}
A time scale $\mathbb{T}$ is called an almost periodic time scale if the set
$$\Lambda_0:=\big\{\tau\in \mathbb{R}:\mathbb{T}_{\tau}\neq \emptyset\big\}\neq\{0\},$$
where $\mathbb{T}_{\tau}=\mathbb{T}\cap\{\mathbb{T}-\tau\}=\mathbb{T}\cap \{t-\tau: t\in \mathbb{T}\}$, and there exists a set $\Lambda$ satisfying
\begin{itemize}
  \item [$(a)$]   $0\in\Lambda\subseteq \Lambda_0$,
  \item [$(b)$] $\Pi(\Lambda)\setminus\{0\}\neq \emptyset$,
  \item [$(c)$] $\widetilde{\mathbb{T}}:=\mathbb{T}(\Pi)=\bigcap\limits_{\tau\in\Pi}\mathbb{T}_\tau\neq \emptyset$,
\end{itemize}where $\Pi:=\Pi(\Lambda)=\{\tau\in \Lambda\subseteq \Lambda_0: \sigma+ \tau\in \Lambda, \forall \sigma\in \Lambda\}$.
\end{definition}
\noindent Then, how to study the almost periodic problem for dynamic equations on the almost periodic time scale defined by  Definition \ref{def31m} with or without condition $(c)$ is a more challenge task.
 This is our future goals.

\end{document}